\newcommand{\bfx}{\boldsymbol x}
\newcommand{\bfX}{\boldsymbol X}
\newcommand{\bff}{\boldsymbol f}
\newcommand{\Dcal}{\mathcal{D}}
\newcommand{\Vcal}{\mathcal{V}}
\newcommand{\bfmu}{\boldsymbol \mu}
\newcommand{\bfu}{\boldsymbol u}
\newcommand{\bfv}{\boldsymbol v}
\newcommand{\bfA}{\boldsymbol A}
\newcommand{\bfB}{\boldsymbol B}
\newcommand{\bfD}{\boldsymbol D}
\newcommand{\bfO}{\boldsymbol O}
\newcommand{\bfU}{\boldsymbol U}
\newcommand{\bfV}{\boldsymbol V}
\newcommand{\bfY}{\boldsymbol Y}
\newcommand{\nh}{N}
\newcommand{\nr}{n}
\newcommand{\hbfX}{\hat{\bfX}}
\newcommand{\bbfX}{\breve{\bfX}}
\newcommand{\hbfA}{\hat{\bfA}}
\newcommand{\hbfB}{\hat{\bfB}}
\newcommand{\hbfx}{\hat{\bfx}}
\newcommand{\rbfx}{\bar{\bfx}}
\newcommand{\bbfx}{\breve{\bfx}}
\newcommand{\tbfA}{\tilde{\bfA}}
\newcommand{\tbfB}{\tilde{\bfB}}
\newcommand{\tbfx}{\tilde{\bfx}}
\newcommand{\tbfX}{\tilde{\bfX}}
\newcommand{\rbfX}{\bar{\bfX}}
\newcommand{\tbfO}{\tilde{\bfO}}
\newcommand{\hbfO}{\hat{\bfO}}
\newcommand{\tbfY}{\tilde{\bfY}}
\newcommand{\bbfY}{\breve{\bfY}}
\newcommand{\rbfY}{\bar{\bfY}}
\newcommand{\bbfD}{\breve{\bfD}}
\newcommand{\rbfD}{\bar{\bfD}}
\newcommand{\sfA}{\mathsf{A}}
\newcommand{\tsfA}{\tilde{\sfA}}
\newcommand{\bbff}{\breve{\bff}}
\newcommand{\bfZ}{\boldsymbol{Z}}
\newcommand{\bfxi}{\boldsymbol{\xi}}
\newcommand{\hbff}{\hat{\bff}}
\newcommand{\tbff}{\tilde{\bff}}
\newcommand{\reprojnr}{\bar{\nr}}
\newcommand{\kibitz}[2]{\ifnum\Comments=1\textcolor{#1}{#2}\fi}
\newenvironment{keywords}%
   {\begin{trivlist}\item[]{\bfseries\sffamily Keywords:}\ }% oder "Keywords:"
   {\end{trivlist}}
\newtheorem{corollary}{Corollary}
\theoremstyle{definition}
\newtheorem{proposition}{Proposition}
\title{Sampling low-dimensional Markovian dynamics for pre-asymptotically recovering reduced models from data with operator inference}
\author{Benjamin Peherstorfer\thanks{Courant Institute of Mathematical Sciences, New York University, New York, NY 10012}}
\begin{document}

\maketitle

\begin{abstract}
This work introduces a method for learning low-dimensional models from data of high-dimensional black-box dynamical systems. The novelty is that the learned models are exactly the reduced models that are traditionally constructed with model reduction techniques that require full knowledge of governing equations and operators of the high-dimensional systems. Thus, the learned models are guaranteed to inherit the well-studied properties of reduced models from traditional model reduction. The key ingredient is a new data sampling scheme to obtain re-projected trajectories of high-dimensional systems that correspond to Markovian dynamics in low-dimensional subspaces. The exact recovery of reduced models from these re-projected trajectories is guaranteed pre-asymptotically under certain conditions for finite amounts of data and for a large class of systems with polynomial nonlinear terms. Numerical results demonstrate that the low-dimensional models learned with the proposed approach match reduced models from traditional model reduction up to numerical errors in practice. The numerical results further indicate that low-dimensional models fitted to re-projected trajectories are predictive even in situations where models fitted to trajectories without re-projection are inaccurate and unstable. 
\end{abstract}

\begin{keywords}operator inference; data-driven modeling; nonintrusive model reduction; proper orthogonal decomposition\end{keywords}

\section{Introduction}
\label{sec:Intro}
Reduced models have become a ubiquitous tool to make tractable computations that require large numbers of model evaluations in, e.g., uncertainty quantification, optimization, and inverse problems. 
Traditional model reduction derives reduced models from high-dimensional (full) models of systems that typically are given in the form of partial differential equations (PDEs) and their corresponding discretized operators. The properties of reduced models have been extensively studied by the model reduction community \cite{AntoulasBook,RozzaPateraSurvey,AntBG10,MORSurveySIREV} and even rigorous error estimation has been established for certain classes of problems \cite{veroy_posteriori_2003,doi:10.1002/fld.867,refId0,doi:10.1080/13873954.2010.514703,HaasdonkError,RozzaPateraSurvey}. 
The aim of data-driven model reduction methods is to learn reduced models from data alone and so to extend the scope of model reduction to settings where the governing equations and the corresponding discrete operators of the high-dimensional systems are unavailable; however, the models learned from data alone typically are only approximations of the reduced models obtained with traditional model reduction and thus establishing the same rigor for the learned models as for reduced models is challenging.  
In contrast, this work presents an approach to learn low-dimensional models from data that exactly match the reduced models that are obtained with traditional model reduction as if the governing equations and discrete operators of the high-dimensional systems were available. This guarantee of exactly recovering reduced models from data holds pre-asymptotically in the number of data points and for a wide class of high-dimensional systems with polynomial nonlinear terms under certain conditions. Thus, models learned with the proposed approach \emph{are} the reduced models of traditional model reduction and therefore directly inherit their well-studied properties. 

There is a large body of literature on learning dynamical-system models from data. We review only the works that are most relevant for the proposed approach. First, there is system identification that originated in the systems and control community \cite{LjungBook}. The Loewner approach was introduced by Antoulas and collaborators \cite{ANTOULAS01011986,5356286,Mayo2007634,BeaG12} and has been extended from linear time-invariant systems to parametrized \cite{doi:10.1137/130914619}, bilinear \cite{doi:10.1137/15M1041432} and quadratic-bilinear systems \cite{doi:10.1002/nla.2200}. Under certain conditions, the models learned with the Loewner approach are the reduced models that are obtained with interpolatory model reduction; however, Loewner models are learned from frequency-response data rather than from time-domain data. The work \cite{PSW16TLoewner} builds on Loewner to learn reduced models of linear time-invariant systems from time-domain data; however, learning from time-domain data can introduce errors and so the learned models can differ from the corresponding Loewner models derived from frequency-response data. Second, there is dynamic mode decomposition \cite{SchmidDMD,FLM:7843190,FLM:6837872,Tu2014391,NathanBook} that best-fits linear operators to state trajectories with respect to the $L_2$ norm. Methods based on the Koopman operator have been developed as one path to extending dynamic mode decomposition to nonlinear dynamical systems \cite{Mezic2005,Williams2015,NathanKoopman1}. Third, there are methods that learn parsimonious models by exploiting sparsity in the high-dimensional systems, e.g., the work by Schaeffer and collaborators \cite{Schaeffer6634,doi:10.1137/18M116798X} and the work by Kutz, Brunton, and collaborators \cite{Brunton12042016,Rudye1602614}. The learned models typically are either continuous in the sense that terms of PDEs are learned from a dictionary or high-dimensional models are learned that inherit sparsity from, e.g., finite-element discretizations of the governing equations of the systems of interest. In contrast, we aim to learn low-dimensional models that help to reduce computational costs in applications that require many model evaluations \cite{MORSurveySIREV,PWG17MultiSurvey}. 

Instead of aiming to find models that best-fit data, we aim to exactly recover reduced models from data so that our models inherit the reduced models' well-studied properties. Our approach is based on operator inference \cite{Peherstorfer16DataDriven}, which has been derived from \cite{pehersto15dynamic} and is a data-driven model reduction approach that learns approximations of reduced models from state trajectories. In \cite{Peherstorfer16DataDriven}, operator inference has been introduced for systems with polynomial nonlinear terms and in \cite{doi:10.2514/6.2019-3707} operator inference is combined with the transform \& learn approach to obtain models of systems with more general nonlinear terms. Operator inference projects trajectories of systems of interest onto low-dimensional subspaces of the high-dimensional state spaces and then fits operators to the projected trajectories via least-squares regression. However, as is known from, e.g., the Mori-Zwanzig formalism from statistical physics \cite{Givon_2004,chorin2006}, the projected trajectories correspond to non-Markovian dynamics in the low-dimensional subspaces even though the high-dimensional trajectories and the corresponding high-dimensional systems are Markovian. The non-Markovian dynamics are related to the closure error in model reduction \cite{Wang12PODclosureComp,2017arXiv171003569F,doi:10.1137/18M1177263,doi:10.1098/rspa.2017.0385,doi:10.1098/rspa.2014.0446,doi:10.1063/1.5003467}. 
 To account for the non-Markovian dynamics, methods have been proposed that learn non-Markovian terms \cite{CHORIN2002239,doi:10.1137/18M1177263,doi:10.1098/rspa.2017.0385,doi:10.1063/1.5003467} and that use time-delay and other embeddings \cite{doi:10.1063/1.5063730,doi:10.1137/15M1054924,doi:10.1137/18M1188227}; however, since we aim to exactly recover the Markovian reduced models that are obtained with traditional model reduction, neither of these remedies are applicable in our situation. Instead, 
we propose a data sampling scheme that iterates between time stepping the high-dimensional systems and projecting onto low-dimensional subspaces to generate trajectories that correspond to low-dimensional Markovian dynamics. We then show that, under certain conditions, applying operator inference to these re-projected trajectories gives the same operators that are obtained with traditional model reduction methods. The result is a pre-asymptotic guarantee to exactly recover reduced models from finite amounts of data for a wide class of systems with polynomial nonlinear terms. Our numerical results demonstrate these theoretical results in practice by learning low-dimensional models that match the reduced models from traditional model reduction up to numerical errors.

Section~\ref{sec:Prelim} discusses preliminaries on dynamical systems, traditional model reduction, operator inference, and formulates the problem. Section~\ref{sec:ReProj} introduces data sampling with re-projection to obtain trajectories that correspond to low-dimensional Markovian dynamics and provides an analysis that shows that operators fitted to these re-projected trajectories are the operators obtained with traditional model reduction. The overall computational approach is presented in Algorithm~\ref{alg:OpInfWithReProj} in Section~\ref{sec:CompProc} and numerical results are given in Section~\ref{sec:NumExp}. Conclusions are drawn in Section~\ref{sec:Conc}.

\section{Preliminaries}
\label{sec:Prelim}
The focus of this work is on dynamical systems with polynomial nonlinear terms, which we introduce in Section~\ref{sec:Prelim:FOM} together with traditional model reduction for these systems in Section~\ref{sec:Prelim:MOR}. A building block of our approach is operator inference \cite{Peherstorfer16DataDriven} for learning reduced models from data, which we discuss in Section~\ref{sec:Prelim:OpInf}. The problem we aim to address is formulated in Section~\ref{sec:Prelim:ProbForm}.

\subsection{Dynamical systems with polynomial nonlinear terms}
\label{sec:Prelim:FOM}
Let $K \in \mathbb{N}$ and consider a dynamical system of the form
\begin{equation}
\bfx_{k + 1}(\bfmu) = \bff(\bfx_k(\bfmu), \bfu_k(\bfmu); \bfmu)\,,\qquad k = 0, \dots, K-1\,,
\label{eq:Prelim:FOMGeneral}
\end{equation}
with state $\bfx_k(\bfmu) \in \mathbb{R}^{\nh}$ of dimension $\nh \in \mathbb{N}$ and input $\bfu_k(\bfmu) \in \mathbb{R}^p$ of dimension $p \in \mathbb{N}$ at time steps $k = 0, \dots, K-1$. The parameter $\bfmu \in \Dcal \subset \mathbb{R}^{d}$ is independent of the time step. The initial condition is $\bfx_0 \in \mathbb{R}^{\nh}$. The potentially nonlinear function $\bff: \mathbb{R}^{\nh} \times \mathbb{R}^p \times \Dcal \to \mathbb{R}^{\nh}$ describes the dynamics of system \eqref{eq:Prelim:FOMGeneral}. Set $\nh_i = {\nh + i - 1 \choose i}$ for $i \in \mathbb{N}$. In the following, we consider systems \eqref{eq:Prelim:FOMGeneral} that are polynomial of order $\ell \in \mathbb{N}$, which means that there exists $\bfA_i(\bfmu) \in \mathbb{R}^{\nh \times \nh_i}$ for $i = 1, \dots, \ell$ and $\bfB(\bfmu) \in \mathbb{R}^{\nh \times p}$ for $\bfmu \in \Dcal$ such that
\begin{equation}
\bff(\bfx_k(\bfmu), \bfu_k(\bfmu); \bfmu) = \sum_{i = 1}^{\ell} \bfA_i(\bfmu)\bfx_k^i(\bfmu) + \bfB(\bfmu)\bfu_k(\bfmu)\,,\qquad k = 0, \dots, K-1\,.
\label{eq:Prelim:FOMPolynomial}
\end{equation}
The vector $\bfx^i_k(\bfmu) \in \mathbb{R}^{\nh_i}$ is the $i$-th power of $\bfx_k$, which is constructed from the Kronecker product $\bfx_k(\bfmu) \otimes \dots \otimes \bfx_k(\bfmu)$ by removing all duplicate entries due to commutativity of the multiplication \cite{Peherstorfer16DataDriven}. Note that $\bfx_k^1(\bfmu) = \bfx_k(\bfmu)$. Define the trajectories $\bfX(\bfmu) = [\bfx_0(\bfmu), \dots, \bfx_{K-1}(\bfmu)] \in \mathbb{R}^{\nh \times K}$ and $\bfY(\bfmu) = [\bfx_1(\bfmu), \dots, \bfx_K(\bfmu)] \in \mathbb{R}^{\nh \times K}$, which differ in their start and end index. Let further $\bfX^i(\bfmu) = [\bfx_0^i(\bfmu), \dots, \bfx_{K - 1}^i(\bfmu)] \in \mathbb{R}^{\nh \times K}$, for $i = 1, \dots, \ell$, be the trajectories corresponding to the $i$-th powers of the states at times $k = 0, \dots, K-1$.  More details on systems with polynomial nonlinear terms and their relevance in computational science and engineering can be found in, e.g., \cite{5991229,doi:10.1137/16M1098280,BorisLift,7330699,Peherstorfer16DataDriven}.

\subsection{Model reduction of systems with polynomial nonlinear terms}
\label{sec:Prelim:MOR}
If operators $\bfA_1(\bfmu), \dots, \bfA_{\ell}(\bfmu), \bfB(\bfmu)$ of \eqref{eq:Prelim:FOMGeneral} for $\bfmu \in \Dcal$ are available, then traditional projection-based model reduction can be applied to find a reduced model; see, e.g., \cite{RozzaPateraSurvey,MORSurveySIREV}. Traditional projection-based model reduction typically first constructs a reduced space and then projects the operators of the high-dimensional system to obtain the reduced operators and to assemble the reduced model.  
Consider first the construction of a reduced space. Let $\bfmu_1, \dots, \bfmu_m \in \Dcal$ and let $\bfX(\bfmu_1), \dots, \bfX(\bfmu_m) \in \mathbb{R}^{\nh \times K}$ be the corresponding trajectories of length $K$. Applying proper orthogonal decomposition (POD) \cite{MORSurveySIREV,SirovichMethodOfSnapshots} to the snapshot matrix $[\bfX(\bfmu_1), \dots, \bfX(\bfmu_m)] \in \mathbb{R}^{\nh \times mK}$ yields an orthonormal basis $\bfv_1, \dots, \bfv_{\nr}$, with $\nr \ll \nh$, that spans an $\nr$-dimensional subspace $\Vcal_{\nr} \subset \mathbb{R}^{\nh}$. Let $\bfV_{\nr} = [\bfv_1, \dots, \bfv_{\nr}] \in \mathbb{R}^{\nh \times \nr}$ be the basis matrix that has as columns the basis vectors $\bfv_1, \dots, \bfv_n$. Note that $\bfV_{\nr}$ is independent of the parameter $\bfmu$ in the following. There are other methods for constructing reduced spaces such as greedy methods \cite{prudhomme_reliable_2001,veroy_posteriori_2003} and interpolatory model reduction \cite{Antoulas2010,gugercin_2008,AntoulasBook}. We refer to \cite{RozzaPateraSurvey,MORSurveySIREV} for details on how to select the parameters $\bfmu_1, \dots, \bfmu_m$ and how to select the dimension $\nr$ of the space $\Vcal_{\nr}$. 

For $j = 1, \dots, m$, the reduced operators are constructed via, e.g., Galerkin projection
\begin{equation}
\tbfA_1(\bfmu_j) = \bfV_{\nr}^T\bfA_1(\bfmu_j)\bfV_{\nr}\,,\qquad \tbfB(\bfmu_j) = \bfV_{\nr}^T\bfB(\bfmu_j)\,,
\label{eq:Prelim:ROMOperators}
\end{equation}
and similarly for $\tbfA_2(\bfmu_j) \in \mathbb{R}^{\nr \times \nr_2}, \dots, \tbfA_{\ell}(\bfmu_j) \in \mathbb{R}^{\nr \times \nr_{\ell}}$ with
\begin{equation}
\nr_i = {\nr + i - 1 \choose i}\,, \qquad i \in \mathbb{N}\,.
\label{eq:Prelim:nri}
\end{equation} 
The reduced model for $\bfmu_j$ is
\begin{equation}
\begin{aligned}
\tbfx_{k + 1}(\bfmu_j) = & \tbff(\tbfx_{k}(\bfmu_j), \bfu_k(\bfmu_j); \bfmu_j) \\
 = & \sum_{i = 1}^{\ell}\tbfA_i(\bfmu_j)\tbfx_k^i(\bfmu_j) +\tbfB(\bfmu_j) \bfu_k(\bfmu_j)\,,\qquad k = 0, \dots, K-1\,,
\end{aligned}
\label{eq:Prelim:iROM}
\end{equation}
with the reduced state $\tilde{\bfx}_k(\bfmu_j) \in \mathbb{R}^{\nr}$ and its $i$-th power $\tilde{\bfx}_k^i(\bfmu_j) \in \mathbb{R}^{\nr_i}$ for $i \in \mathbb{N}$. The initial condition is $\tbfx_0(\bfmu_j) = \bfV_{\nr}^T\bfx_0(\bfmu_j)$. Once the reduced models $\tbff(\cdot, \cdot; \bfmu_1), \dots, \tbff(\cdot, \cdot; \bfmu_m)$ are constructed for all $m$ parameters $\bfmu_1, \dots, \bfmu_m$, a reduced model for $\bfmu \in \Dcal$ is derived by element-wise interpolation of the reduced operators corresponding to $\bfmu_1, \dots, \bfmu_m$. If structure of the reduced operators is known, e.g., symmetry and positive definiteness, then this structure can preserved in the interpolation. We refer to \cite{NME:NME2681,panzer_parametric_2010,degroote_interpolation_2010} for details on interpolating reduced operators in model reduction.

\subsection{Operator inference}
\label{sec:Prelim:OpInf}
The traditional model reduction approach described in Section~\ref{sec:Prelim:MOR} to construct a reduced model \eqref{eq:Prelim:iROM} is intrusive in the sense that the operators $\bfA_1(\bfmu_j), \dots, \bfA_{\ell}(\bfmu_j), \bfB(\bfmu_j)$ of system \eqref{eq:Prelim:FOMPolynomial} for $j = 1, \dots, m$ are required in the projection step \eqref{eq:Prelim:ROMOperators}. Operator inference is introduced in \cite{Peherstorfer16DataDriven} to derive approximations of the reduced operators $\tbfA_1(\bfmu_j), \dots, \tbfA_{\ell}(\bfmu_j), \tbfB(\bfmu_j)$ from data of the high-dimensional system without requiring the high-dimensional operators $\bfA_1(\bfmu_j), \dots, \bfA_{\ell}(\bfmu_j), \bfB(\bfmu_j)$. 

\subsubsection{Operator inference} Operator inference proceeds in three steps. First, state trajectories $\bfX(\bfmu_1), \dots, \bfX(\bfmu_m)$ and $\bfY(\bfmu_1), \dots, \bfY(\bfmu_m)$ are obtained by querying the system \eqref{eq:Prelim:FOMGeneral} at parameters $\bfmu_1, \dots, \bfmu_m \in \Dcal$ to derive a reduced space spanned by the columns of $\bfV_{\nr} = [\bfv_1, \dots, \bfv_{\nr}]$. Many of the basis construction techniques developed in traditional model reduction can be applied; see references given in Section~\ref{sec:Prelim:MOR}. In the following, we will use POD to construct $\bfV_{\nr}$ as described in Section~\ref{sec:Prelim:MOR}. The second step of operator inference is to project the trajectories onto the reduced space $\Vcal_{\nr}$ spanned by the columns of $\bfV_{\nr}$ and so to obtain the projected trajectories 
\[
\bbfX(\bfmu_j) = \bfV_{\nr}^T\bfX(\bfmu_j)\,,\qquad \bbfY(\bfmu_j) = \bfV_{\nr}^T\bfY(\bfmu_j)\,,\qquad j = 1, \dots, m\,.
\]
In the third step of operator inference, the operators
\begin{equation}
\hbfA_1(\bfmu_j) \in \mathbb{R}^{\nr \times \nr_1}, \dots, \hbfA_{\ell}(\bfmu_j) \in \mathbb{R}^{\nr \times \nr_{\ell}}, \hbfB(\bfmu_j) \in \mathbb{R}^{\nr \times p}
\label{eq:Prelim:InferredOperators}
\end{equation}
are learned via least-squares regression 
\begin{equation}
\min_{\hbfA_1(\bfmu_j), \dots, \hbfA_{\ell}(\bfmu_j), \hbfB(\bfmu_j)} \sum_{k = 0}^{K-1} \left\| \sum_{i = 1}^{\ell} \hbfA_i(\bfmu_j)\bbfx_k^{i}(\bfmu_j) + \hbfB(\bfmu_j)\bfu_k(\bfmu_j) - \bbfx_{k + 1}(\bfmu_j) \right\|_2^2
\label{eq:Prelim:OpInfOpti}
\end{equation}
to obtain the model
\begin{equation}
\begin{aligned}
\hbfx_{k + 1}(\bfmu_j) = & \sum_{i = 1}^{\ell} \hbfA_i(\bfmu_j)\hbfx_k^{i}(\bfmu_j) + \hbfB(\bfmu_j)\bfu_k(\bfmu_j)\\
= & \hbff(\hbfx_k(\bfmu_j), \bfu_k(\bfmu_j); \bfmu_j)
\end{aligned}
\label{eq:Prelim:nROM}
\end{equation}
for $j = 1, \dots, m$. Note that the least-squares problem \eqref{eq:Prelim:OpInfOpti} is solved for each parameter $\bfmu_j$ with $j = 1, \dots, m$. The state of the learned model at time $k$ is $\hbfx_k(\bfmu_j) \in \mathbb{R}^{\nr}$ with its $i$-th power $\hbfx_k^i(\bfmu_j)$. Note that the state $\hbfx_k(\bfmu_j)$ is obtained by time stepping the learned model \eqref{eq:Prelim:nROM}, whereas the projected state $\bbfx_k(\bfmu_j)$ is obtained by projecting the high-dimensional state $\bfx_k(\bfmu_j)$ at time $k$ onto the reduced space $\Vcal_{\nr}$. The initial condition is $\hbfx_0(\bfmu_j) = \bfV_{\nr}^T\bfx_0(\bfmu_j)$. To obtain a model for $\bfmu \in \Dcal$, the operators of the learned models corresponding to $\bfmu_1, \dots, \bfmu_m$ are interpolated as in traditional model reduction; see Section~\ref{sec:Prelim:MOR}. We refer to \cite{Peherstorfer16DataDriven,doi:10.2514/6.2019-3707,2019arXiv190803620S,pehersto15dynamic} for details on operator inference.

\subsubsection{Data matrix} It will be convenient to write \eqref{eq:Prelim:OpInfOpti} for each $j = 1, \dots, m$ as
\begin{equation}
\min_{\hbfO(\bfmu_j)} \left\|\bbfD^T(\bfmu_j)\hbfO^T(\bfmu_j) - \bbfY^T(\bfmu_j)\right\|_F^2
\label{eq:Prelim:OpInfDOY}
\end{equation}
with the data matrix 
\begin{equation}
\bbfD(\bfmu_j) = 
\begin{bmatrix}
\bbfX(\bfmu_j)\\
\bbfX^{2}(\bfmu_j)\\
\vdots\\
\bbfX^{\ell}(\bfmu_j)\\
\bfU(\bfmu_j)
\end{bmatrix} \in \mathbb{R}^{\sum_{i = 1}^{\ell} n_i + p \times K}
\label{eq:Prelim:OpInfDataMatrixProj}
\end{equation}
and $\bbfX^i(\bfmu_j) = [\bbfx_0^i(\bfmu_j), \dots, \bbfx_{K - 1}^i(\bfmu_j)] \in \mathbb{R}^{\nr_i \times K}$ for $i = 1, \dots, \ell$ and $\bfU(\bfmu_j) = [\bfu_0(\bfmu_j), \dots, \bfu_{K - 1}(\bfmu_j)] \in \mathbb{R}^{p \times K}$. 
The operator matrix is
\[
\hbfO(\bfmu_j) = \begin{bmatrix}
\hbfA_1(\bfmu_j) & \dots & \hbfA_{\ell}(\bfmu_j) & \hbfB(\bfmu_j)
\end{bmatrix} \in \mathbb{R}^{\nr \times \sum_{i = 1}^{\ell} n_i + p}\,.
\]

\subsection{Problem formulation}
\label{sec:Prelim:ProbForm}
Our goal is exactly recovering the operators \eqref{eq:Prelim:ROMOperators} of the intrusive reduced model from data of the high-dimensional system without knowledge of the high-dimensional operators \eqref{eq:Prelim:ROMOperators}. The operators \eqref{eq:Prelim:InferredOperators} obtained with operator inference from the projected trajectories, as described in Section~\ref{sec:Prelim:OpInf}, equal the intrusive operators \eqref{eq:Prelim:ROMOperators} in the limit of $\nr \to \nh$ under certain conditions described in \cite{Peherstorfer16DataDriven}. However, typically, one is interested in reduced models with $\nr \ll \nh$, in which case the learned operators can differ significantly from the intrusive operators. To explain the origin of the difference between the intrusive and the learned, non-intrusive operators, consider the trajectory $\tbfX(\bfmu) = [\tbfx_0(\bfmu), \dots, \tbfx_{K - 1}(\bfmu)] \in \mathbb{R}^{\nr \times K}$ obtained by time stepping the intrusive reduced model \eqref{eq:Prelim:iROM}. Even if $\bfx_0(\bfmu) \in \Vcal_{\nr}$, and thus $\tbfx_0(\bfmu) = \bbfx_0(\bfmu)$, the projected trajectory $\bbfX(\bfmu)$ can be different from the intrusive trajectory $\tbfX(\bfmu)$, i.e., there is a non-zero closure error
\begin{equation}
\|\bbfX(\bfmu) - \tbfX(\bfmu)\|_F\,.
\label{eq:Prelim:ClosureError}
\end{equation}
By fitting operators to projected trajectories with operator inference as described in Section~\ref{sec:Prelim:OpInf} and in \cite{Peherstorfer16DataDriven}, the closure error \eqref{eq:Prelim:ClosureError} is introduced into the learned operators, which means that the learned operators can fail to approximate the dynamics of the intrusive reduced model. 

\begin{figure}
\centering
\begin{tabular}{cc}
{\resizebox{0.48\columnwidth}{!}{\LARGE\input{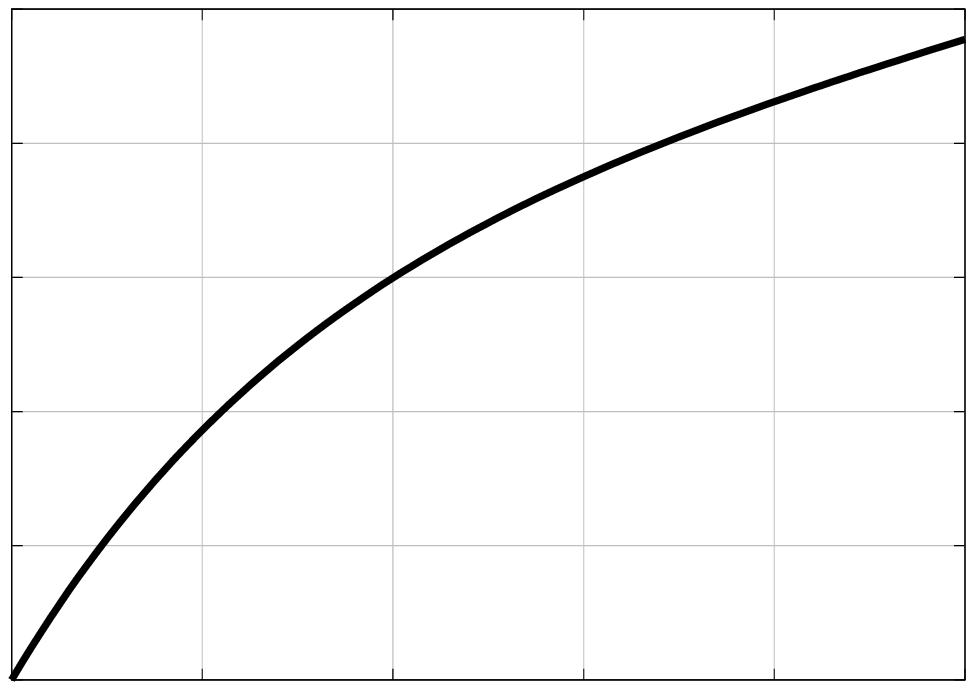}}} & {\resizebox{0.48\columnwidth}{!}{\LARGE\input{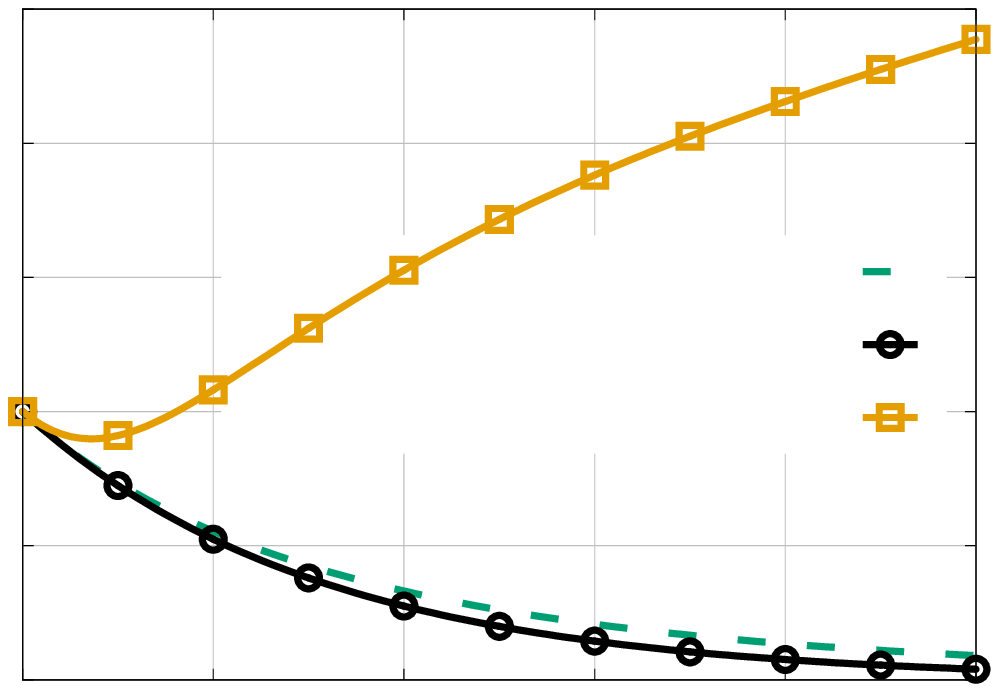}}}\\
\scriptsize (a) closure error & \scriptsize (b) trajectories
\end{tabular}
\caption{Toy example: The closure error \eqref{eq:Prelim:ClosureError} pollutes operators that are fitted to projected trajectories with operator inference, which can lead to models that only poorly approximate the corresponding intrusive reduced models and projected trajectories of the high-dimensional systems.}
\label{fig:Prelim:Example}
\end{figure}

We demonstrate the effect of the closure error on operator inference with a toy example. Consider a system \eqref{eq:Prelim:FOMPolynomial} of degree $\ell = 1$, order $\nh = 10$, time steps $K = 100$, and without inputs, i.e., a time-discrete autonomous linear dynamical system $\bfx_{k + 1} = \bfA_1\bfx_k$ for $k = 0, \dots, K - 1$. The matrix $\bfA_1 \in \mathbb{R}^{\nh \times \nh}$ is generated by first sampling entries uniformly in $[0, 1]$ and then transforming them to ensure the eigenvalues of $\bfA_1$ have absolute values less than 1. The initial condition $\bfx_0$ is the first column of the identity matrix of dimension $\nr \times \nr$ and $\bfX$ is the corresponding trajectory of length $K$. Set $\nr = 2$ and consider the 2-dimensional space $\Vcal_{\nr}$ that is spanned by the initial condition and the canonical unit vector with 1 at component 2. Let $\bfV_{\nr}$ be the corresponding basis matrix and let $\bbfX = \bfV_{\nr}^T\bfX$ be the projected trajectory. The intrusive reduced model is given by $\tbfx_{k + 1} = \tbfA_1\tbfx_k$ with $\tbfA_1 = \bfV_n^T\bfA_1\bfV_n$, $\tbfx_0 = \bfV_{\nr}^T\bfx_0 = [1, 0]^T$, and the trajectory $\tbfX$. Figure~\ref{fig:Prelim:Example}a shows the difference $\|\bbfx_k - \tbfx_k\|_2$ for time steps $k = 0, \dots, K-1$, which is the 2-norm of the difference of the projected state and the state of the intrusive reduced model at time step $k$. 
We now derive a model with operator inference from the projected trajectory $\bbfX$ as described in Section~\ref{sec:Prelim:OpInf} and denote the trajectory corresponding to this learned model as $\hbfX$. The trajectory $\hbfX$ differs significantly from the trajectory of the intrusive reduced model, as shown in Figure~\ref{fig:Prelim:Example}b. This toy example demonstrates that the closure error can have a significant polluting effect on fitting operators to projected trajectories and so lead to models that exhibit different dynamics than the corresponding intrusive reduced models and high-dimensional systems. Thus, if the aim is to learn from data the same reduced models that intrusive model reduction constructs, then there is a need for revising operator inference to guarantee the recovery of the intrusive operators from trajectories with finite length $K < \infty$ and for dimensions $\nr \ll \nh$.

\section{Sampling Markovian dynamics via re-projection}
\label{sec:ReProj}
We present a data sampling scheme that generates trajectories $\rbfX(\bfmu) \in \mathbb{R}^{\nr \times K}$ from the high-dimensional system \eqref{eq:Prelim:FOMGeneral} so that the closure error $\|\rbfX(\bfmu) - \tbfX(\bfmu)\|_F = 0$ is zero. Applying operator inference to these trajectories $\rbfX(\bfmu)$ of sufficient length $K < \infty$ exactly recovers the corresponding intrusive reduced model under certain conditions. 
In Section~\ref{sec:ReProj:MoriZwanzig}, we build on the Mori-Zwanzig formalism \cite{Givon_2004,chorin2006} to clarify that the closure error \eqref{eq:Prelim:ClosureError} corresponds to non-Markovian dynamics of the projected trajectories in $\Vcal_{\nr}$. These non-Markovian dynamics cannot be represented by a reduced model of the form \eqref{eq:Prelim:iROM}. Section~\ref{sec:ReProj:ReProj} describes a data sampling scheme that cancels these non-Markovian dynamics after each time step to obtain trajectories that correspond to Markovian dynamics in the reduced space $\Vcal_{\nr}$. Section~\ref{sec:ReProj:ExactRecovery} shows that these re-projected trajectories equal the trajectories that are obtained with an intrusive reduced model and thus that operator inference exactly recovers the intrusive reduced model from these re-projected trajectories. 

In this section, we focus on learning reduced models corresponding to a single parameter $\bfmu_j$, which then is subsequently repeated for all parameter $j = 1, \dots, m$. To ease exposition, we drop the dependence on $\bfmu_j$ in this section.

\subsection{Non-Markovian dynamics of projected states}
\label{sec:ReProj:MoriZwanzig}
To motivate our data sampling scheme, we first discuss the closure error  $\|\bbfX - \tbfX\|_F$ on the toy example given in the problem formulation in Section~\ref{sec:Prelim:ProbForm}. The arguments in this section are not new; we refer to the literature from the statistical physics community on the Mori-Zwanzig formalism, which describes the arguments in this section for more general systems and in stochastic settings; see, e.g., the surveys \cite{Givon_2004,chorin2006} for more details. 

Our toy example is an autonomous linear system, which corresponds to system \eqref{eq:Prelim:FOMGeneral} with $\ell = 1$ and $\bfB = \boldsymbol 0$, i.e.,
\begin{equation}
\bfx_{k + 1} = \bfA_1\bfx_k\,,\qquad k = 0, \dots, K-1\,.
\label{eq:ReProj:LinearSystem}
\end{equation}
Consider now the orthogonal complement $\Vcal_{\nr}^{\bot}$ of $\Vcal_{\nr}$ that is spanned by the orthonormal columns of the basis matrix $\bfV_{\nr}^{\bot} \in \mathbb{R}^{\nh \times \nh - \nr}$ such that $\mathbb{R}^{\nh} = \Vcal_{\nr} \oplus \Vcal_{\nr}^{\bot}$. Define $\bfx^{\parallel}_k = \bfV_{\nr}^T\bfx_k$ and $\bfx^{\bot}_k = (\bfV_{\nr}^{\bot})^T\bfx_k$ so that $\bfx_k = \bfV_{\nr}\bfx^{\parallel}_k + \bfV_{\nr}^{\bot}\bfx^{\bot}$. Correspondingly, \eqref{eq:ReProj:LinearSystem} is split into
\begin{align*}
\bfx_{k + 1}^{\parallel} & = \bfA_1^{\parallel\parallel}\bfx_k^{\parallel} + \bfA_1^{\parallel\bot}\bfx_k^{\bot}\\
\bfx_{k + 1}^{\bot} & = \bfA_1^{\bot\parallel}\bfx_k^{\parallel} + \bfA_1^{\bot\bot}\bfx_k^{\bot}\,,
\end{align*}
with the matrices
\[
\bfA_1^{\parallel\parallel} = \bfV_{\nr}^T\bfA_1\bfV_{\nr}\,, \bfA_1^{\parallel\bot} = \bfV_{\nr}^T\bfA_1\bfV_{\nr}^{\bot}\,, \bfA_1^{\bot\parallel} = (\bfV_{\nr}^{\bot})^T\bfA_1\bfV_{\nr}\,, \bfA_1^{\bot\bot} = (\bfV_{\nr}^{\bot})^T\bfA_1(\bfV_{\nr}^{\bot})^T\,.
\]
Model reduction as described in Section~\ref{sec:Prelim:MOR} constructs the reduced operator $\tbfA_1 = \bfA_1^{\parallel\parallel}$ via projection. Consider now the trajectory $\bfX = [\bfx_0, \bfx_1, \dots, \bfx_{K - 1}]$ and its projection $\bbfX = [\bbfx_0, \dots, \bbfx_{K - 1}]$ with $\bfV_{\nr}$. Then, we obtain 
\[
\bfV_{\nr}^T\bfx_{k + 1} = \bbfx_{k+1} = \bfx^{\parallel}_{k + 1} = \bfA_1^{\parallel\parallel}\bfx_k^{\parallel} + \bfA_1^{\parallel\bot}\bfx_k^{\bot}\,,\qquad k = 0, \dots, K-1\,,
\]
which gives with an inductive argument that 
\[
\bbfx_{k + 1} = \bfx_{k + 1}^{\parallel} = \underbrace{\bfA_1^{\parallel\parallel}\bfx_k^{\parallel}}_{\text{Markovian term}} + \underbrace{\bfA_1^{\parallel\bot}\sum_{i = 0}^{k-1} \left(\bfA_1^{\bot\bot}\right)^{k - 1 - i}\bfA_1^{\bot\parallel}\bfx_i^{\parallel}}_{\text{non-Markovian term}} + \bfA_1^{\parallel\bot}\left(\bfA_1^{\bot\bot}\right)^k\bfx_0^{\bot}\,.
\]
Thus, the projected state $\bbfx_{k + 1} = \bfx_{k + 1}^{\parallel}$ at time $k + 1$ depends on the time history of projected states $\bfx_0^{\parallel}, \bfx_1^{\parallel}, \dots, \bfx_k^{\parallel}$ instead of only on the last time step $\bfx_k^{\parallel}$. This means that the dynamics of the trajectory $\bfX$ become non-Markovian if projected onto the reduced space $\Vcal_{\nr}$ in the sense that going from $\bfx_k^{\parallel}$ to $\bfx_{k + 1}^{\parallel}$ requires knowledge of the time history $\bfx_0^{\parallel}, \dots, \bfx_{k-1}^{\parallel}$ in general. Therefore, the reduced model \eqref{eq:Prelim:iROM}, which is derived with traditional model reduction, cannot describe well the projected trajectory $\bbfX$ because the reduced model \eqref{eq:Prelim:iROM} is Markovian in the sense that the state $\tbfx_{k + 1}$ at time step $k + 1$ depends on the state $\tbfx_k$ of the previous time step $k$ alone, instead of on the history $\tbfx_{0}, \dots, \tbfx_{k-1}$.

\subsection{Data sampling with re-projection to avoid non-Markovian dynamics}
\label{sec:ReProj:ReProj}
We now describe our sampling scheme with re-projection. Consider an initial condition $\bfx_0 \in \Vcal_{\nr}$ and set $\rbfx_0 = \bfV_{\nr}^T\bfx_0$. Note that $\bfV_{\nr}\rbfx_0 = \bfx_0$ because $\bfx_0 \in \Vcal_{\nr}$. Our scheme proceeds iteratively, see Figure~\ref{fig:ReProj}. In the first iteration, system \eqref{eq:Prelim:FOMGeneral} is queried at initial condition $\bfV_{\nr}\rbfx_0$ and input $\bfu_0$ to obtain 
\[
\bfx_{\text{tmp}} = \bff(\bfV_{\nr}\rbfx_0, \bfu_0)\,.
\]
Then, the re-projected state $\rbfx_1 = \bfV_{\nr}^T\bfx_{\text{tmp}}$ is computed by projecting $\bfx_{\text{tmp}}$ onto $\Vcal_{\nr}$. In the second iteration, system \eqref{eq:Prelim:FOMGeneral} is queried for a single time step at the initial condition $\bfV_{\nr}\rbfx_1$ and input $\bfu_1$ to obtain $\bff(\bfV_{\nr}\rbfx_1, \bfu_1)$ and to compute $\rbfx_2$ via projection $\rbfx_2 = \bfV_n^T\bff(\bfV_n\rbfx_1, \bfu_1)$. This process is repeated to generate the re-projected states $\rbfx_0, \rbfx_1, \dots, \rbfx_K$ and to collect them into the re-projected trajectories $\rbfX = [\rbfx_0, \rbfx_1, \dots, \rbfx_{K - 1}]$ and $\rbfY = [\rbfx_1, \dots, \rbfx_K]$.

\begin{figure}
\centering
\fbox{\includegraphics[width=1.0\columnwidth]{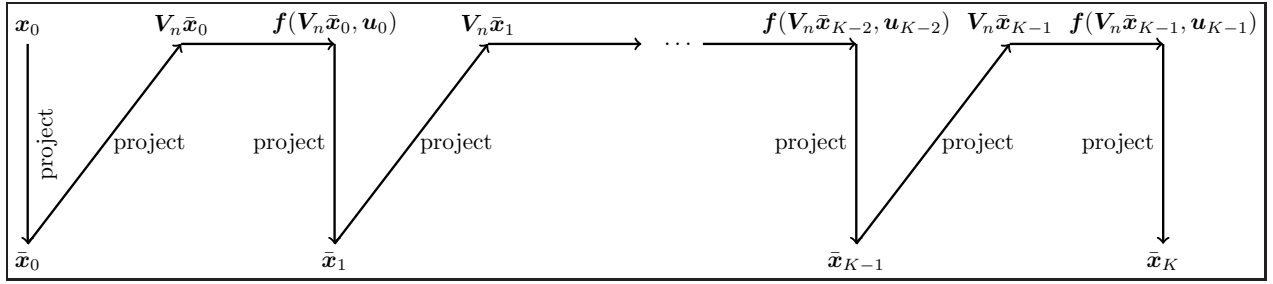}}
\caption{The scheme shows data sampling with re-projection. Under certain conditions that are discussed in Section~\ref{sec:ReProj:ExactRecovery}, the re-projected trajectories $\rbfX = [\rbfx_0, \rbfx_1, \dots, \rbfx_{K-1}]$ are the trajectories $\tbfX$ obtained by time stepping the intrusive reduced model. Thus, the closure error \eqref{eq:Prelim:ClosureError} of the re-projected trajectories is zero.}
\label{fig:ReProj}
\end{figure}

Algorithm~\ref{alg:ReProj} summarizes our data sampling scheme with re-projection. The inputs to Algorithm~\ref{alg:ReProj} are the high-dimensional system $\bff$, a basis matrix $\bfV_{\nr}$, an initial condition $\bfx_0 \in \Vcal_{\nr}$, a parameter $\bfmu \in \Dcal$, and inputs $\bfu_0, \dots, \bfu_{K - 1}$. Line~\ref{alg:ReProj:ICinV} projects the initial condition $\bfx_0$ to obtain $\rbfx_0$. The \texttt{for} loop on line~\ref{alg:ReProj:For} iterates over the time steps $k = 0, \dots, K-1$ and generates the re-projected state $\rbfx_{k + 1}$ by querying the high-dimensional system for a single time step in line~\ref{alg:ReProj:Query}. The re-projected trajectories $\rbfX$ and $\rbfY$ are returned in line~\ref{alg:ReProj:Return}.

\begin{algorithm}[t]
\caption{Data sampling with re-projection}\label{alg:ReProj}
\begin{algorithmic}[1]
\Procedure{ReProj}{$\bff, \bfV_{\nr}, \bfx_0, \bfu_0, \dots, \bfu_{K-1}$}
\State Set $\rbfx_0 = \bfV_{\nr}^T\bfx_0$\label{alg:ReProj:ICinV}
\For{$k = 0, \dots, K-1$}\label{alg:ReProj:For}
\State Query system for a single time step $\bfx_{\text{tmp}} = \bff(\bfV_{\nr}\rbfx_k, \bfu_k)$\label{alg:ReProj:Query}
\State Set $\rbfx_{k+1} = \bfV_{\nr}^T\bfx_{\text{tmp}}$\label{alg:ReProj:ReProj}
\EndFor
\State Return $\rbfX = [\rbfx_0, \rbfx_1, \dots, \rbfx_{K - 1}]$ and $\rbfY = [\rbfx_1, \dots, \rbfx_K]$\label{alg:ReProj:Return}
\EndProcedure
\end{algorithmic}
\end{algorithm}

\subsection{Exact recovery of reduced models from re-projected trajectories}
\label{sec:ReProj:ExactRecovery}
Proposition~\ref{prop:rX=tX} shows that the trajectories $\rbfX$ and $\rbfY$ obtained with sampling with re-projection are the trajectories $\tbfX$ and $\tbfY$ obtained from time stepping the corresponding intrusive reduced models. Proposition~\ref{prop:rX=tX} leads to Corollary~\ref{cor:Exact} that shows that intrusive reduced models are exactly recovered from re-projected trajectories in the sense that $\|\tbfA_i - \hbfA_i\|_F = \|\tbfB - \hbfB\|_F = 0$ for $i = 1, \dots, \ell$ under certain conditions. This is a pre-asymptotic result in the sense that it holds for finite-length trajectories, i.e., for finite number of data points, and for reduced spaces $\Vcal_{\nr}$ of dimensions $\nr < \nh$.

\begin{proposition}
Consider a system \eqref{eq:Prelim:FOMPolynomial} with polynomial nonlinear terms. Let $\bfx_0 \in \Vcal_{\nr}$ be an initial condition and let $\bfu_0, \dots, \bfu_{K-1}$ be inputs. Generate trajectories $\rbfX$ and $\rbfY$ from system \eqref{eq:Prelim:FOMPolynomial} with re-projection as described in Algorithm~\ref{alg:ReProj}. Then, $\rbfX = \tbfX$ and $\rbfY = \tbfY$ holds, where $\tbfX = [\tbfx_0, \dots, \tbfx_{K - 1}]$ and $\tbfY = [\tbfx_1, \dots, \tbfx_K]$ are trajectories obtained with the intrusive reduced model \eqref{eq:Prelim:iROM} with initial condition $\tbfx_0 = \bfV_{\nr}^T\bfx_0$ and inputs $\bfu_0, \dots, \bfu_{K-1}$.
\label{prop:rX=tX}
\end{proposition}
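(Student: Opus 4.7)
The plan is to prove Proposition~\ref{prop:rX=tX} by a straightforward induction on the time step $k$, with the heart of the argument being an identity between the high-dimensional dynamics $\bff$ and the Galerkin-reduced dynamics $\tbff$ when the former is evaluated on lifted reduced states. Specifically, I will first establish the pointwise identity
\begin{equation*}
\tbff(\bfz, \bfu) = \bfV_{\nr}^T \bff(\bfV_{\nr}\bfz, \bfu)\,, \qquad \text{for every } \bfz \in \mathbb{R}^{\nr},\ \bfu \in \mathbb{R}^p\,,
\end{equation*}
and then run an induction on $k$ driven by this identity, combined with the observation that the re-projection step in line~\ref{alg:ReProj:ReProj} of Algorithm~\ref{alg:ReProj} is exactly $\bfV_{\nr}^T$ applied to one high-dimensional time step initiated at $\bfV_{\nr}\rbfx_k$.

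The key step, and the only nontrivial one, is verifying the pointwise identity for the polynomial nonlinearities. For the affine input term it is immediate from $\tbfB = \bfV_{\nr}^T \bfB$. For the linear term it follows from $\tbfA_1 = \bfV_{\nr}^T \bfA_1 \bfV_{\nr}$. For the $i$-th power terms with $i \geq 2$, I will use the fact (as described in Section~\ref{sec:Prelim:FOM}, see \cite{Peherstorfer16DataDriven}) that the deduplicated Kronecker power satisfies $(\bfV_{\nr}\bfz)^i = \bfW_{\nr,i}\bfz^i$ for an explicit matrix $\bfW_{\nr,i} \in \mathbb{R}^{\nh_i \times \nr_i}$ built from $\bfV_{\nr}$, so that the Galerkin operator is consistently defined as $\tbfA_i = \bfV_{\nr}^T \bfA_i \bfW_{\nr,i}$. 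Summing over $i$ then yields the identity. This is where the polynomial-structure assumption is essential; I would flag it as the main (though still routine) obstacle because it hinges on the precise bookkeeping of deduplicated Kronecker powers.

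With the identity in hand, the induction is direct. For the base case, both Algorithm~\ref{alg:ReProj} and \eqref{eq:Prelim:iROM} initialize by $\rbfx_0 = \tbfx_0 = \bfV_{\nr}^T \bfx_0$. For the inductive step, assuming $\rbfx_k = \tbfx_k$, lines~\ref{alg:ReProj:Query}--\ref{alg:ReProj:ReProj} give
\begin{equation*}
\rbfx_{k+1} = \bfV_{\nr}^T \bff(\bfV_{\nr}\rbfx_k, \bfu_k) = \bfV_{\nr}^T \bff(\bfV_{\nr}\tbfx_k, \bfu_k) = \tbff(\tbfx_k, \bfu_k) = \tbfx_{k+1}\,,
\end{equation*}
closing the induction. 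Concatenating the columns yields $\rbfX = \tbfX$ and $\rbfY = \tbfY$, which is the claim. Note that the hypothesis $\bfx_0 \in \Vcal_{\nr}$ is used only to ensure $\bfV_{\nr}\rbfx_0 = \bfx_0$ so that the re-projected trajectory begins from the same state as the high-dimensional trajectory it samples; for the equality with the intrusive reduced trajectory $\tbfX$, the induction actually needs only the definitional identification $\tbfx_0 = \bfV_{\nr}^T\bfx_0 = \rbfx_0$.
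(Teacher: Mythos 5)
Your proposal is correct and follows essentially the same route as the paper: both establish that the Galerkin dynamics commute with projection on lifted reduced states and then induct on the time step, the only difference being that the paper handles the polynomial terms by zero-padding to full Kronecker powers and invoking the mixed-product property, whereas you keep the deduplicated powers and introduce the lifting matrix $\bfW_{\nr,i}$ with $(\bfV_{\nr}\bfz)^i = \bfW_{\nr,i}\bfz^i$ --- equivalent bookkeeping. Your closing remark that $\bfx_0 \in \Vcal_{\nr}$ is not actually needed for $\rbfX = \tbfX$ (only the identification $\rbfx_0 = \tbfx_0 = \bfV_{\nr}^T\bfx_0$) is a correct minor refinement of the paper's argument.
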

\begin{proof}
With zero padding, the operators $\bfA_1, \dots, \bfA_{\ell}$ of \eqref{eq:Prelim:FOMPolynomial} can be reformulated to $\sfA_1, \dots, \sfA_{\ell}$ so that
\[
\bfx_{k + 1} = \sum_{i = 1}^{\ell} \sfA_i (\underbrace{\bfx_k \otimes \dots \otimes \bfx_k}_{i-\text{times}}) + \bfB\bfu_k\,,\qquad k = 0, \dots, K-1\,,
\]
where $\otimes$ is the Kronecker product. Note that $\sfA_i \in \mathbb{R}^{\nh \times \nh^i}$ for $i = 1, \dots, \ell$. Similarly, operators $\tsfA_1, \dots, \tsfA_{\ell}$ are obtained via projection
\[
\tsfA_i = \bfV_{\nr}^T\sfA_i (\underbrace{\bfV_{\nr} \otimes \dots \otimes \bfV_{\nr}}_{i-\text{times}})\,,\qquad i = 1, \dots, \ell\,,
\]
so that $\tbfA_i\tbfx_k^i = \tsfA_i(\tbfx_k \otimes \dots \otimes \tbfx_k)$ holds for $i = 1, \dots, \ell$. Set $\rbfx_0 = \bfV_{\nr}^T\bfx_0$ and note that $\bfx_0 \in \Vcal_{\nr}$ and thus $\rbfx_0 = \tbfx_0$. Querying system \eqref{eq:Prelim:FOMPolynomial} at initial condition $\bfV_{\nr}\rbfx_0 = \bfx_0$ as described in line~\ref{alg:ReProj:Query} of Algorithm~\ref{alg:ReProj} leads to 
\begin{align}
\bfx_{\text{tmp}} = & \sum_{i = 1}^{\ell}\bfA_i\bfx_0^k + \bfB\bfu_0\,,\notag\\
= & \sum_{i = 1}^{\ell}\sfA_i(\bfx_0 \otimes \dots \otimes \bfx_0) + \bfB\bfu_0\notag\\
= & \sum_{i = 1}^{\ell} \sfA_i(\bfV_{\nr}\tbfx_0 \otimes \dots \otimes \bfV_{\nr}\tbfx_0) + \bfB \bfu_0\label{eq:ReProj:rX=tX:One}\\
= & \sum_{i = 1}^{\ell} \sfA_i(\bfV_{\nr} \otimes \dots \otimes \bfV_{\nr})(\tbfx_0 \otimes \dots \otimes \tbfx_0) + \bfB\bfu_0\label{eq:ReProj:rX=tX:Two}\,,
\end{align}
where we used that $\bfx_0 = \bfV_{\nr}\tbfx_0$ in \eqref{eq:ReProj:rX=tX:One} and where we exploited the mixed-product property of the Kronecker product in \eqref{eq:ReProj:rX=tX:Two}. We now project \eqref{eq:ReProj:rX=tX:Two} to obtain
\begin{align*}
\bfV_{\nr}^T\bfx_{\text{tmp}} = & \sum_{i = 1}^{\ell}\bfV_{\nr}^T\sfA_i(\bfV_{\nr} \otimes \dots \otimes \bfV_{\nr})(\tbfx_0 \otimes \dots \otimes \tbfx_0) + \bfV_{\nr}^T\bfB\bfu_0\\
= & \sum_{i = 1}^{\ell} \tsfA_i(\tbfx_0 \otimes \dots \otimes \tbfx_0) + \tbfB\bfu_0
\end{align*}
and thus $\tbfx_1 = \bfV_{\nr}^T\bfx_{\text{tmp}}$. According to line~\ref{alg:ReProj:ReProj} in Algorithm~\ref{alg:ReProj}, the re-projected state is $\rbfx_1 = \bfV_{\nr}^T\bfx_{\text{tmp}}$ and thus $\tbfx_1 = \rbfx_1$ holds. The same steps can be repeated for time step $k$ with $\rbfx_k = \tbfx_k$ to obtain $\tbfx_{k + 1} = \rbfx_{k + 1}$. Then, with induction follows that $\rbfX = \tbfX$ and $\rbfY = \tbfY$ hold. 
\end{proof}

\begin{corollary}
Let the trajectories $\rbfX$ and $\rbfY$ of length $K$ be generated with Algorithm~\ref{alg:ReProj} from a system with polynomial nonlinear terms up to degree $\ell$. Let further  
\begin{equation}
K \geq p + \sum_{i = 1}^{\ell} n_i\,,
\label{eq:ReProj:Cond}
\end{equation}
with $n_i$ defined in \eqref{eq:Prelim:nri} for $i = 1, \dots, \ell$. Consider the data matrix 
\begin{equation}
\rbfD = \begin{bmatrix}
\rbfX\\
\rbfX^2\\
\vdots\\
\rbfX^j\\
\bfU
\end{bmatrix} \in \mathbb{R}^{\sum_{i = 1}^{\ell} n_i + p \times K}
\label{eq:ReProj:DataMatrixReProj}
\end{equation}
derived from the re-projected trajectory $\rbfX$, cf.~the data matrix $\bbfD$ derived from the projected trajectory $\bbfX$ defined in \eqref{eq:Prelim:OpInfDataMatrixProj}. If $\rbfD$ has full rank, then the least-squares problem 
\begin{equation}
\min_{\hbfO} \|\rbfD^T\hbfO^T - \rbfY^T\|_F^2
\label{eq:ReProj:LSQReProj}
\end{equation}
has a unique solution $\hbfO^*$ with objective 0 and that solution is $\hbfO^* = [\tbfA_1, \tbfA_2, \dots, \tbfA_{\ell}, \tbfB]$, where $\tbfA_1, \dots, \tbfA_{\ell}, \tbfB$ are the intrusive operators \eqref{eq:Prelim:ROMOperators}.
\label{cor:Exact}
\end{corollary}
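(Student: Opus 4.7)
The plan is to leverage Proposition~\ref{prop:rX=tX} as the main engine and then read off both the zero-residual claim and uniqueness from linear-algebra facts about the least-squares normal equations.

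First I would invoke Proposition~\ref{prop:rX=tX} directly: the re-projected trajectories satisfy $\rbfX = \tbfX$ and $\rbfY = \tbfY$, where $\tbfX$ and $\tbfY$ come from time stepping the intrusive reduced model \eqref{eq:Prelim:iROM}. This means that for every $k = 0, \ldots, K-1$ we have the exact identity $\rbfx_{k+1} = \sum_{i=1}^{\ell} \tbfA_i \rbfx_k^i + \tbfB \bfu_k$. Stacking these $K$ identities column by column and recognizing the block structure of $\rbfD$ in \eqref{eq:ReProj:DataMatrixReProj}, I would write this compactly as $\rbfY = [\tbfA_1, \tbfA_2, \ldots, \tbfA_{\ell}, \tbfB]\,\rbfD$. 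Transposing gives $\rbfD^T [\tbfA_1, \ldots, \tbfA_{\ell}, \tbfB]^T = \rbfY^T$, so the candidate $\hbfO^* = [\tbfA_1, \ldots, \tbfA_{\ell}, \tbfB]$ achieves objective zero in \eqref{eq:ReProj:LSQReProj}.

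Next I would argue uniqueness. The least-squares problem \eqref{eq:ReProj:LSQReProj} decouples into $\nr$ independent problems, one per row of $\hbfO$, each with design matrix $\rbfD^T \in \mathbb{R}^{K \times (\sum_{i=1}^{\ell} n_i + p)}$. Each subproblem has a unique minimizer precisely when $\rbfD^T$ has full column rank, equivalently when $\rbfD$ has full row rank $\sum_{i=1}^{\ell} n_i + p$. The dimension condition \eqref{eq:ReProj:Cond} ensures $\rbfD$ is wide enough to \emph{possibly} have full row rank, and the assumption that $\rbfD$ has full rank furnishes exactly this. Since any minimizer must make the residual vanish (as we have already exhibited one that does), and since the minimizer is unique, we conclude $\hbfO^* = [\tbfA_1, \ldots, \tbfA_{\ell}, \tbfB]$ is the solution.

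There is essentially no hard step here: Proposition~\ref{prop:rX=tX} already did the substantive work of showing that re-projected data lies exactly on the intrusive-reduced-model trajectory, so the corollary reduces to the standard observation that an overdetermined linear system with a full-rank design matrix and a known exact solution has that exact solution as its unique least-squares minimizer. The only point that warrants care in writing is making the correspondence between the row blocks of $\rbfD$ and the column blocks of $\hbfO$ match up correctly, in particular checking that the $i$-th-power block $\rbfX^i$ is paired with $\tbfA_i$ (of dimensions $\nr \times n_i$) and that the input block $\bfU$ is paired with $\tbfB$.
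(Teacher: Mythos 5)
Your proposal is correct and follows essentially the same route as the paper's proof: invoke Proposition~\ref{prop:rX=tX} to show that $[\tbfA_1,\dots,\tbfA_{\ell},\tbfB]$ attains objective zero, then decouple \eqref{eq:ReProj:LSQReProj} into $\nr$ row-wise least-squares problems and use condition \eqref{eq:ReProj:Cond} together with the full-rank assumption on $\rbfD$ to get uniqueness. Nothing is missing.
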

\begin{proof}
First, because of Proposition~\ref{prop:rX=tX}, we have $\tbfX = \rbfX$ and $\tbfY = \rbfY$, and thus the states of $\rbfX$ and $\rbfY$ satisfy the equations corresponding to the intrusive reduced model \eqref{eq:Prelim:iROM}. This means that the matrix $\tbfO = [\tbfA_1, \tbfA_2, \dots, \tbfA_{\ell}, \tbfB]$ is a solution of \eqref{eq:ReProj:LSQReProj} because it achieves objective 0. To show uniqueness, note that \eqref{eq:ReProj:LSQReProj} corresponds to $i = 1, \dots, \nr$ independent least-squares problems
\begin{equation}
\min_{\hbfO_i} \|\rbfD\hbfO_i^T - \rbfY_i^T\|_2^2\,,
\label{eq:ReProj:SeparateOpInf}
\end{equation}
with $\hbfO = [\hbfO_1^T, \dots, \hbfO_{\nr}^T]^T$ and $\rbfY = [\rbfY_1^T, \dots, \rbfY_{\nr}^T]^T$. Each of the rows of $\hbfO$ has length $p + \sum_{i = 1}^{\ell} \nr_i$ and thus each of the least-squares problems \eqref{eq:ReProj:SeparateOpInf} has $p + \sum_{i = 1}^{\ell} \nr_i$ unknowns. Condition \eqref{eq:ReProj:Cond} guarantees that the number of equations in each least-squares problem \eqref{eq:ReProj:SeparateOpInf} is at least $K \geq p + \sum_{i = 1}^{\ell} \nr_i$. Thus, if $\rbfD$ has full rank, then there is at most one solution that solves \eqref{eq:ReProj:SeparateOpInf}. Since $\tbfO$ leads to objective 0, we obtain $\hbfO^* = \tbfO$. 
\end{proof}

\section{Computational procedure and practical aspects}
\label{sec:CompProc}
This section summarizes the overall computational procedure of operator inference with re-projected trajectories in Algorithm~\ref{alg:OpInfWithReProj} and discusses practical aspects as well as limitations of the approach.

\subsection{Computational procedure}
Algorithm~\ref{alg:OpInfWithReProj} summarizes the overall procedure of recovering reduced models from re-projected trajectories with operator inference. The inputs to Algorithm~\ref{alg:OpInfWithReProj} are $\bff$, the degree $\ell$, the dimension $\nr$ of the reduced space, the parameters $\bfmu_1, \dots, \bfmu_m$, the initial conditions $\bfx_0(\bfmu_1), \dots, \bfx_0(\bfmu_m)$, and the input trajectories $\bfU(\bfmu_1), \dots, \bfU(\bfmu_m)$. Algorithm~\ref{alg:OpInfWithReProj} time steps the high-dimensional system to obtain the trajectories $\bfX(\bfmu_1), \dots, \bfX(\bfmu_m)$ in the \texttt{for} loop on line~\ref{alg:OpInfWithReProj:ForInitial}. Then, in line~\ref{alg:OpInfWithReProj:POD}, the POD basis matrix $\bfV_{\nr}$ is computed from the snapshot matrix $[\bfX(\bfmu_1), \dots, \bfX(\bfmu_m)]$. The \texttt{for} loop in line~\ref{alg:OpInfWithReProj:ReProj} calls Algorithm~\ref{alg:ReProj} to generate the re-projected trajectories $\rbfX(\bfmu_1), \dots, \rbfX(\bfmu_m)$ and $\rbfY(\bfmu_1), \dots, \rbfY(\bfmu_m)$. Operator inference as described in Corollary~\ref{cor:Exact} is then applied to the re-projected trajectories in line~\ref{alg:OpInfWithReProj:Assemble} and line~\ref{alg:OpInfWithReProj:Solve} to learn operators. Line~\ref{alg:OpInfWithReProj:Return} returns the learned operators.

The computational costs of Algorithm~\ref{alg:OpInfWithReProj} are typically dominated by querying the high-dimensional system. The costs of assembling the data matrix on line~\ref{alg:OpInfWithReProj:Assemble} and the costs of solving the corresponding least-squares problem on line~\ref{alg:OpInfWithReProj:Solve} typically are negligible. In the \texttt{for} loop in line~\ref{alg:OpInfWithReProj:ForInitial}, the high-dimensional system is time stepped to generate the trajectories for constructing the POD basis matrix, which is similar to traditional, intrusive model reduction. The \texttt{for} loop in line~\ref{alg:OpInfWithReProj:ReProj} requires time stepping the high-dimensional systems once more to sample the re-projected trajectories with Algorithm~\ref{alg:ReProj}. Thus, the computational costs of learning a reduced model with operator inference with re-projection is twice as high as the costs of constructing a model with operator inference without re-projection. Note, however, that it is unnecessary to sample re-projected trajectories of length $K$. Sampling shorter re-projected trajectories can significantly reduce the computational costs of operator inference with re-projection.

\subsection{Practical aspects and condition of least-squares problem}
\label{sec:ReProj:Practical}
We make three remarks of practical aspects of operator inference with re-projection. First, Corollary~\ref{cor:Exact} states that operator inference from re-projected trajectories gives the intrusive reduced models if condition \eqref{eq:ReProj:Cond} is satisfied and if the data matrix $\rbfD$ defined in \eqref{eq:ReProj:DataMatrixReProj} has full rank. It is straightforward to numerically verify these two conditions in practice and so to determine if Corollary~\ref{cor:Exact} applies and if the intrusive reduced model is obtained up to numerical errors.

Second, to sample the re-projected trajectories with Algorithm~\ref{alg:ReProj}, it is necessary to have available the high-dimensional system in the sense that it can be time stepped for a single time step with initial condition $\rbfx_k$ for $k = 0, \dots, K-1$. This is in contrast to operator inference without re-projection, which is applicable even if only the trajectories $\bfX(\bfmu_1), \dots, \bfX(\bfmu_m)$ and the corresponding inputs $\bfU(\bfmu_1), \dots, \bfU(\bfmu_m)$ are available and the high-dimensional system cannot be queried. However, note that it is unnecessary to time step the high-dimensional system at arbitrary initial conditions. The re-projected states are close to the states of the high-dimensional system if the space $\Vcal_{\nr}$ is sufficiently rich, which typically is a necessary requirement for the success of model reduction in any case.

Third, in practice, the condition number of $\rbfD^T(\bfmu_j)\rbfD(\bfmu_j), j = 1, \dots, m$ can be high, which means that numerical errors are amplified and pollute the learned operators even if all conditions required for Corollary~\ref{cor:Exact} are satisfied. To keep the condition number of $\rbfD^T(\bfmu_j)\rbfD(\bfmu_j)$ low, we concatenate multiple trajectories corresponding to different inputs in practice. Let $\bfU_1(\bfmu_j), \dots, \bfU_{m^{\prime}}(\bfmu_j)$ be $m^{\prime} \in \mathbb{N}$ input trajectories and let $\bfX_{1}(\bfmu_j), \dots, \bfX_{m^{\prime}}(\bfmu_j)$ be the corresponding trajectories and $\rbfX_{1}(\bfmu_j), \dots, \rbfX_{m^{\prime}}(\bfmu_j)$ be the corresponding re-projected trajectories computed with Algorithm~\ref{alg:ReProj}. We concatenate the trajectories to obtain
\begin{equation}
\qquad \bfU(\bfmu_j) = \begin{bmatrix}
\bfU_1(\bfmu_j), \dots, \bfU_{m^{\prime}}(\bfmu_j)
\end{bmatrix}\,,\quad \rbfX = \begin{bmatrix}
\rbfX_1(\bfmu_j), \dots, \rbfX_{m^{\prime}}(\bfmu_j)
\end{bmatrix}\,,
\label{eq:ReProj:StackedTraj}
\end{equation}
and then use \eqref{eq:ReProj:StackedTraj} and $\rbfY(\bfmu_j)$ obtained from $\rbfY_1(\bfmu_j), \dots, \rbfY_{m^{\prime}}(\bfmu_j)$ in the least-squares problem \eqref{eq:ReProj:LSQReProj} to learn a model. This is a similar process as suggested in \cite{Peherstorfer16DataDriven}.

\begin{algorithm}[t]
\caption{Operator inference with re-projected trajectories}\label{alg:OpInfWithReProj}
\begin{algorithmic}[1]
\Procedure{OpInfRP}{$\bff, \ell, \nr, \bfmu_1, \dots, \bfmu_m, \bfx_0(\bfmu_1), \dots, \bfx_0(\bfmu_m), \bfU(\bfmu_1), \dots, \bfU(\bfmu_m)$}
\For {$j = 1, \dots, m$}\label{alg:OpInfWithReProj:ForInitial}
\State Time step $\bff$ at $\bfmu_j$ with $\bfx_0(\bfmu_j)$ and $\bfU(\bfmu_j)$ to obtain $\bfX(\bfmu_j)$
\EndFor
\State Derive POD basis matrix $\bfV_{\nr}$ from snapshot matrix $[\bfX(\bfmu_1), \dots, \bfX(\bfmu_m)]$\label{alg:OpInfWithReProj:POD}
\For {$j = 1, \dots, m$}\label{alg:OpInfWithReProj:ReProj}
\State Call Algorithm~\ref{alg:ReProj} with $\bfV_{\nr}, \bfx_0(\bfmu_j), \bfU(\bfmu_j)$ to obtain $\rbfX(\bfmu_j)$ and $\rbfY(\bfmu_j)$ 
\State Assemble data matrix $\rbfD(\bfmu_j)$ defined in \eqref{eq:ReProj:DataMatrixReProj}\label{alg:OpInfWithReProj:Assemble}
\State Solve \eqref{eq:ReProj:LSQReProj} to learn operators $\hbfA_1(\bfmu_j), \dots, \hbfA_{\ell}(\bfmu_j), \hbfB(\bfmu_j)$\label{alg:OpInfWithReProj:Solve}
\EndFor
\State Return learned operators $\hbfA_{1}(\bfmu_j), \dots, \hbfA_{\ell}(\bfmu_j), \hbfB(\bfmu_j)$ for $j = 1, \dots, m$\label{alg:OpInfWithReProj:Return}
\EndProcedure
\end{algorithmic}
\end{algorithm}

\section{Numerical results}
\label{sec:NumExp}
The numerical results in this section demonstrate that the proposed data sampling strategy with re-projection leads to low-dimensional models that match reduced models derived with traditional model reduction methods up to numerical errors in practice. The toy example introduced in the problem formulation in Section~\ref{sec:Prelim:ProbForm} is revisited in Section~\ref{sec:NumExp:ToyExample}. Section~\ref{sec:NumExp:Burgers} derives models for the viscous Burgers' equation and Section~\ref{sec:NumExp:Chafee} for the Chafee-Infante equation. Both of these examples are one dimensional in the spatial domain. Section~\ref{sec:NumExp:DiffReact} demonstrates learning models from re-projected trajectories on a diffusion-reaction equation with two spatial dimensions. 

\begin{figure}
\centering
\begin{tabular}{cc}
\multicolumn{2}{c}{{\resizebox{0.48\columnwidth}{!}{\LARGE\input{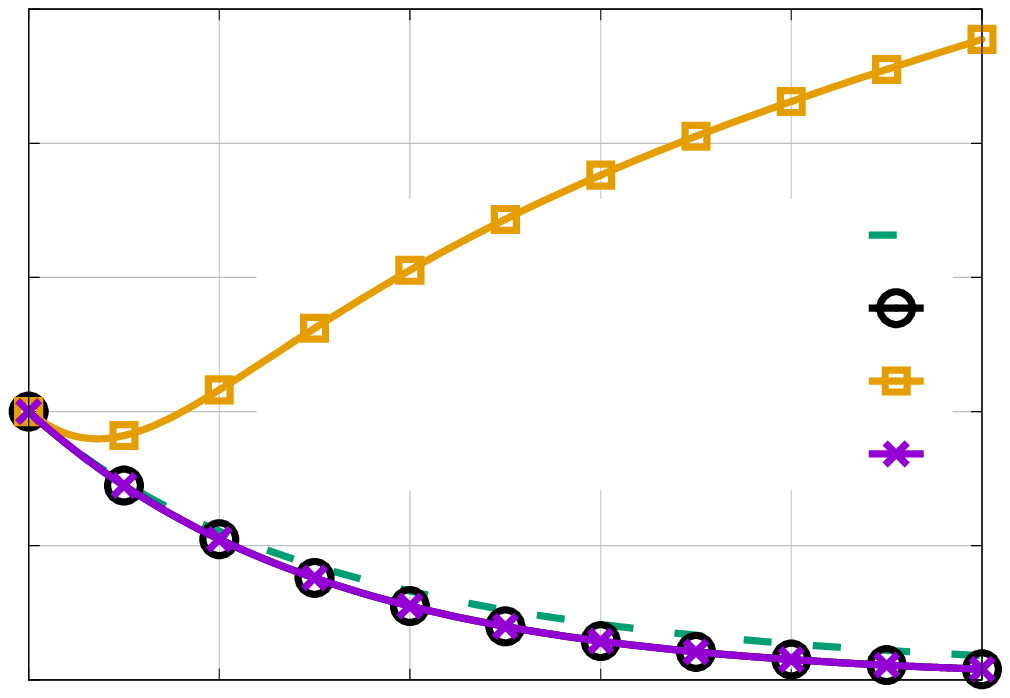}}}}  \\
\multicolumn{2}{c}{\scriptsize (a) trajectories}\\
{\resizebox{0.48\columnwidth}{!}{\LARGE\input{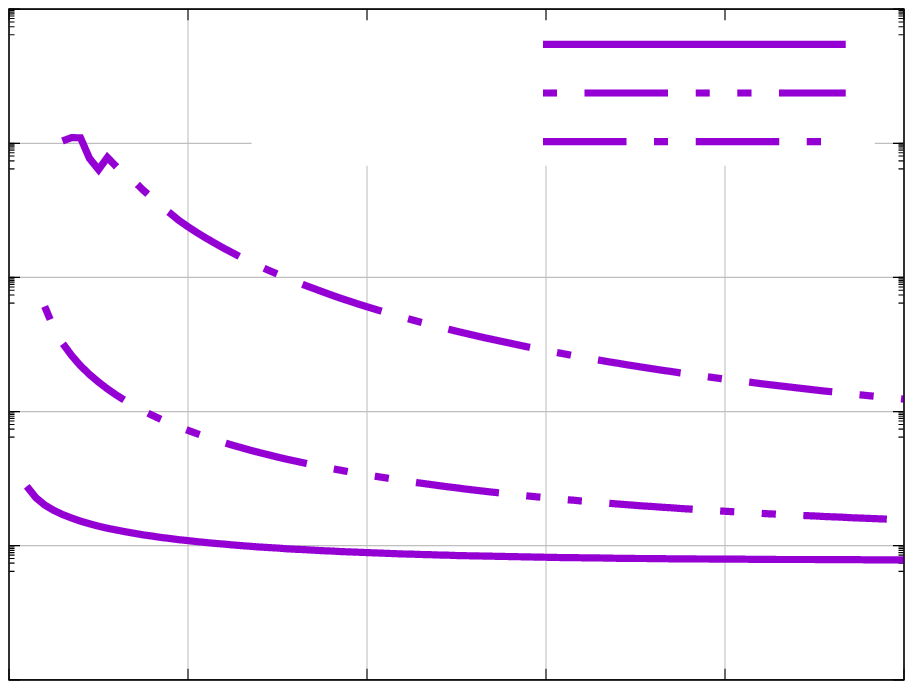}}} & 
{\resizebox{0.48\columnwidth}{!}{\LARGE\input{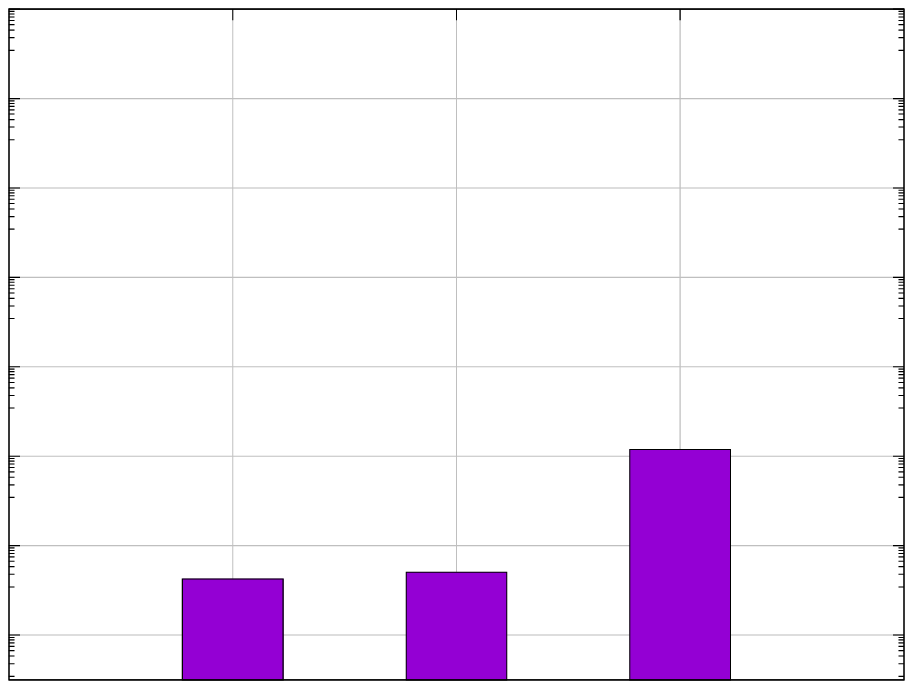}}}\\
\scriptsize (b) condition number of $\rbfD^T\rbfD$ & \scriptsize (c) difference of reduced model and learned model
\end{tabular}
\caption{Toy example: Plot (a) shows that time stepping the model fitted to re-projected trajectories gives a trajectory that matches the trajectory obtained with the intrusive reduced model. Plot (b) and (c) show that the condition number of $\rbfD^T\rbfD$ can be high, which means that numerical errors are amplified. Increasing the number of time steps $K$ and concatenating multiple trajectories as described in Section~\ref{sec:ReProj:Practical}, and as used in Sections~\ref{sec:NumExp:Burgers}--\ref{sec:NumExp:DiffReact}, typically helps to keep the condition number reasonably low in practice.}
\label{fig:NumExp:ToyExample}
\end{figure}

\subsection{Toy example}
\label{sec:NumExp:ToyExample}
We revisit the toy example introduced in Section~\ref{sec:Prelim:ProbForm}. Let $\rbfX$ be the re-projected trajectory obtained with Algorithm~\ref{alg:ReProj}. Following the least-squares problem \eqref{eq:ReProj:LSQReProj} described in Corollary~\ref{cor:Exact}, we learn a model from the re-projected trajectory $\rbfX$ and time step the learned model to obtain the trajectory $\hbfX$, which is plotted in Figure~\ref{fig:NumExp:ToyExample}a. The trajectory of the model learned from the re-projected trajectory closely follows the trajectory of the intrusive reduced model, which is in stark contrast to the model learned from the trajectory $\bbfX$ without re-projection. Thus, the results in Figure~\ref{fig:NumExp:ToyExample}a are in agreement with Corollary~\ref{cor:Exact}. 

Now consider the data matrix $\rbfD$ defined in \eqref{eq:ReProj:DataMatrixReProj}. Figure~\ref{fig:NumExp:ToyExample}b shows the condition number of $\rbfD^T\rbfD$ for dimensions $\nr \in \{2, 4, 6\}$ and various numbers of time steps $K$. In this example, the condition number grows with the dimension $\nr$. This means that even though condition \eqref{eq:ReProj:Cond} together with a full-rank data matrix are sufficient to recover the intrusive reduced model, numerical errors are introduced into the learned operators because of the potentially high condition number of $\rbfD^T\rbfD$; cf.~Section~\ref{sec:ReProj:Practical}. Figure~\ref{fig:NumExp:ToyExample}c demonstrates that the difference
\begin{equation}
\frac{\|\hbfX - \tbfX\|_F}{\|\tbfX\|_F}
\label{eq:NumExp:DiffErrorNoParameter}
\end{equation}
between the trajectory $\tbfX$ of the intrusive reduced model and the trajectory $\hbfX$ of the model learned from the re-projected trajectory grows with the dimension $\nr$ as numerical errors are amplified by the increasing condition number of $\rbfD^T\rbfD$ in this example. Increasing the number of time steps $K$ seems to help to reduce the condition number, as shown in Figure~\ref{fig:NumExp:ToyExample}b.

\subsection{Burgers' equation}
\label{sec:NumExp:Burgers}
A similar setup as in \cite{Peherstorfer16DataDriven} is used for demonstrating the proposed approach on the viscous Burgers' equation.

\subsubsection{Setup}
\label{sec:NumExp:Burgers:Setup}
Set the spatial domain to $\Omega = (-1, 1) \subset \mathbb{R}$ and the parameter domain to $\Dcal = [10^{-1}, 1]$. Let $T = 1$ be end time. Consider the viscous Burgers' equation 
\[
\frac{\partial}{\partial t}x(\xi, t; \mu) + x(\xi, t; \mu)\frac{\partial}{\partial \xi} x(\xi, t; \mu) - \mu \frac{\partial^2}{\partial \xi^2} x(\xi, t; \mu) = 0\,, \qquad \xi \in \Omega\,,
\] 
with the spatial coordinate $\xi \in \Omega$, time $t \in [0, T]$, and parameter $\mu \in \Dcal$. Impose Dirichlet boundary conditions $x(-1, t; \mu) = u(t)$ and $x(1, t; \mu) = -u(t)$ with the input function $u: [0, T] \to \mathbb{R}$. The initial condition is zero $x(\xi, 0; \mu) = 0$ for $\xi \in \Omega$. We discretize the Burgers' equation with finite difference on an equidistant grid in $\Omega$ with mesh width $2^{-7}$, which leads to a system of ordinary differential equations of order $\nh = 128$. Time is discretized with the forward Euler method and time step size $\delta t = 10^{-4}$ to obtain
\begin{equation}
\bfx_{k + 1}(\mu) = \bfA_1(\mu)\bfx_k(\mu) + \bfA_2\bfx^2_k(\mu) + \bfB u_k(\mu)\,,\qquad k = 0, \dots, K-1\,,
\label{eq:NumExp:Burgers:Eq}
\end{equation}
which is a polynomial nonlinear dynamical system \eqref{eq:Prelim:FOMPolynomial} of degree $\ell = 2$ with $\bfA_1(\mu) \in \mathbb{R}^{\nh \times \nh}, \bfA_2 \in \mathbb{R}^{\nh \times \nh_2}$, and input matrix $\bfB \in \mathbb{R}^{\nh \times 1}$. Note that $\bfA_2$ and $\bfB$ are independent of the parameter $\mu$. Note further that $\bff(\bfx_k(\mu), u_k(\mu); \mu) = \bfA_1(\mu)\bfx_k(\mu) + \bfA_2\bfx^2_k(\mu) + \bfB u_k(\mu)$ in this example. The number of time steps is $K = 10^4$. 

To generate trajectories from system \eqref{eq:NumExp:Burgers:Eq} for learning low-dimensional models, consider the $m = 10$ equidistant parameters $\mu_1, \dots, \mu_m \in \Dcal$ in the parameter domain $\Dcal$. Set $m^{\prime} = 5$ and consider the input trajectories $\bfU_l(\mu_j) = [u_{l,0}(\mu_j), \dots, u_{l,K-1}(\mu_j)]$ for $j = 1, \dots, m$ and $l = 1, \dots, m^{\prime}$, where $u_{l,i}(\mu_j)$ is a realization of the random variable with uniform distribution in $[0, 10]$ for $i = 0, \dots, K-1$. Then, we generate $m$ trajectories $\bfX_l(\mu_1), \dots, \bfX_l(\mu_m)$ for $l = 1, \dots, m^{\prime}$ to derive the POD basis matrix $\bfV_{\reprojnr}$ of the POD space $\Vcal_{\reprojnr}$ of dimension $\reprojnr \in \mathbb{N}$ from the snapshot matrix $[\bfX(\mu_1), \dots, \bfX(\mu_m)]$. The trajectories are $\bfX(\mu_i) = [\bfX_1(\mu_i), \dots, \bfX_m^{\prime}(\mu_i)]$ for $i = 1, \dots, m$, cf.~Section~\ref{sec:ReProj:Practical}. The re-projected trajectories $\rbfX(\mu_1), \dots, \rbfX(\mu_m)$, and the corresponding trajectories $\rbfY(\mu_1), \dots, \rbfY(\mu_m)$, are obtained by calling Algorithm~\ref{alg:ReProj} for each parameter $\mu_1, \dots, \mu_m$ and for $l = 1, \dots, m^{\prime}$ and by concatenating the trajectories corresponding to the same parameters as described in Section~\ref{sec:ReProj:Practical}. We learn models $\hbff(\cdot, \cdot, \mu_1), \dots, \hbff(\cdot, \cdot, \mu_m)$ by solving the optimization problem \eqref{eq:ReProj:LSQReProj} stated in Corollary~\ref{cor:Exact} using the re-projected trajectories. We verified numerically that the data matrices have full rank. Condition \eqref{eq:ReProj:Cond} holds as well, and thus Corollary~\ref{cor:Exact} is applicable in this setup, which means that we expect that time stepping the learned model gives a trajectory that matches the corresponding trajectory of the intrusive reduced model up to numerical errors. We construct the intrusive reduced models $\tbff(\cdot, \cdot; \mu_1), \dots, \tbff(\cdot, \cdot; \mu_m)$ and learn models $\bbff(\cdot, \cdot; \mu_1), \dots, \bbff(\cdot, \cdot; \mu_m)$ from the projected trajectories $\bbfX(\mu_1), \dots, \bbfX(\mu_m)$ (with\emph{out} re-projection) as described in Section~\ref{sec:Prelim:OpInf}. The projected trajectories $\bbfX(\mu_1), \dots, \bbfX(\mu_m)$ are obtained by concatenating the trajectories $\bbfX_1(\mu_1), \dots, \bbfX_{m^{\prime}}(\mu_m)$ accordingly. For a parameter $\mu \in \Dcal \setminus \{\mu_1, \dots, \mu_m\}$, model $\hbff(\cdot, \cdot; \mu)$ is derived by component-wise spline interpolation of the operators of the learned models $\hbff(\cdot, \cdot; \mu_1), \dots, \hbff(\cdot, \cdot; \mu_m)$. The same interpolation approach is used for deriving the intrusive reduced model $\tbff(\cdot, \cdot; \mu)$ and the model $\bbff(\cdot, \cdot; \mu)$ learned from trajectories without re-projection for $\mu \in \Dcal \setminus \{\mu_1, \dots, \mu_m\}$. To derive model $\hbff(\cdot, \cdot; \mu)$ for dimension $\nr < \reprojnr$, we truncate the operators of $\hbff(\cdot, \cdot; \mu)$ accordingly, which is the same approach as used in \cite{Peherstorfer16DataDriven}. This means that for $\nr < \reprojnr$, the $\nr \times \nr$ submatrix of $\hbfA_1(\mu) \in \mathbb{R}^{\reprojnr \times \reprojnr}$ of model $\hbff(\cdot, \cdot; \mu)$ is extracted, which corresponds to the first $\nr$ POD modes. A similar process is performed for the input matrix, quadratic terms, and higher-degree nonlinear terms if present. Thus, model $\hbff(\cdot, \cdot; \mu)$ is learned once for dimension $\reprojnr$ and then truncated for $\nr < \reprojnr$. The intrusive reduced model $\tbff(\cdot, \cdot; \mu)$ and model $\bbff(\cdot, \cdot; \mu)$ are truncated the same way for $\nr < \reprojnr$.

\begin{figure}
\centering
\begin{tabular}{cc}
{\resizebox{0.48\columnwidth}{!}{\LARGE\input{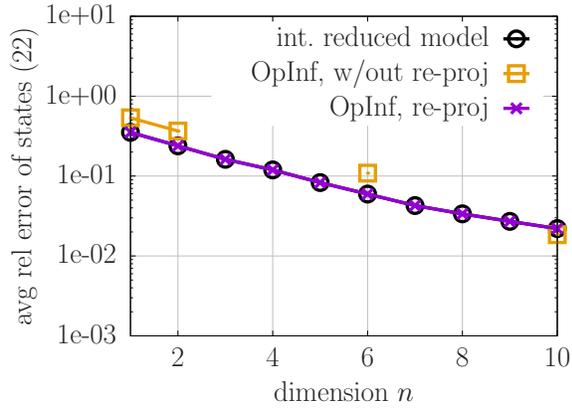}}} & 
{\resizebox{0.48\columnwidth}{!}{\LARGE\input{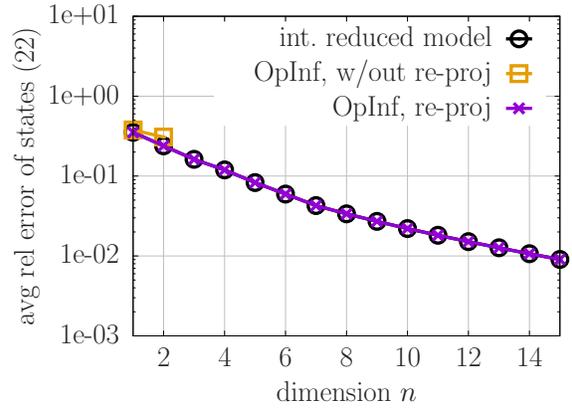}}}\\
\scriptsize (a) training, re-projection dimension $\reprojnr = 10$ & \scriptsize (b) training, re-projection dimension $\reprojnr = 15$\\
{\resizebox{0.48\columnwidth}{!}{\LARGE\input{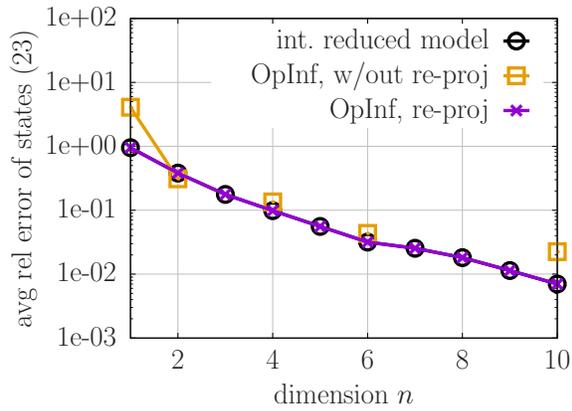}}} & 
{\resizebox{0.48\columnwidth}{!}{\LARGE\input{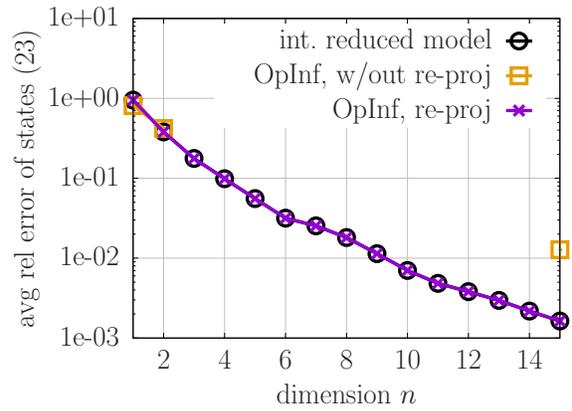}}}\\
\scriptsize (c) test, re-projection dimension $\reprojnr = 10$ & \scriptsize (d) test, re-projection dimension $\reprojnr = 15$
\end{tabular}
\caption{Burgers' equation: The results in plots (a)-(d) demonstrate Corollary~\ref{cor:Exact} that states that models fitted with operator inference to re-projected trajectories are the reduced models obtained with traditional model reduction. In contrast, models fitted to projected trajectories (without re-projection) perform significantly worse and even show unstable behavior (missing values).}
\label{fig:NumExp:BurgersError}
\end{figure}

\subsubsection{Results}
Figure~\ref{fig:NumExp:BurgersError}a shows the error 
\begin{equation}
\frac{1}{m}\sum_{i = 1}^{m} \frac{\|\bfV_{\nr}\bfZ(\mu_i) - \bfX(\mu_i)\|_F}{\|\bfX(\mu_i)\|_F}\,,
\label{eq:NumExp:RelAvgError}
\end{equation}
where $\bfZ(\mu_i) = [\bfZ_1(\mu_i), \dots, \bfZ_{m^{\prime}}(\mu_i)]$, for $i = 1, \dots, m$, is the concatenated trajectory of either the intrusive reduced model $\tbff(\cdot, \cdot; \mu_i)$, the model $\hbff(\cdot, \cdot; \mu_i)$ learned from re-projected trajectories, or the model $\bbff(\cdot, \cdot; \mu_i)$ learned from trajectories without re-projection for all $m^{\prime}$ inputs $\bfU_1(\mu_i), \dots, \bfU_{m^{\prime}}(\mu_i)$. The dimension $\reprojnr$ of the POD space used for re-projection is set to $\reprojnr = 10$ and operators are truncated as described in Section~\ref{sec:NumExp:Burgers:Setup} to compute error \eqref{eq:NumExp:RelAvgError} corresponding to models with $\nr < \reprojnr$. The results in Figure~\ref{fig:NumExp:BurgersError}a are reported for the training parameters $\mu_1, \dots, \mu_m$ and the training inputs that are also used in Section~\ref{sec:NumExp:Burgers:Setup} to construct the POD basis matrix and to learn the models. The intrusive reduced model achieves an error of almost $10^{-2}$ for $\nr = 10$ dimensions. The model learned from trajectories without re-projection exhibits unstable behavior for most dimensions $\nr = 1, \dots, 10$ in the sense that the state during time stepping numerically diverges to NaNs (not a number). Missing values in Figure~\ref{fig:NumExp:BurgersError}a mean that the states diverged to NaNs. In contrast, the model learned from trajectories with re-projection achieves an error \eqref{eq:NumExp:RelAvgError} that closely follows the error of the intrusive reduced model. To test the learned models on parameters that are different from the parameters used for learning the models, we select $m_{\text{test}} = 7$ test parameters $\mu_1^{\text{test}}, \dots, \mu_7^{\text{test}}$ equidistantly in $\Dcal$ and set the input constant to 1. The corresponding error
\begin{equation}
\frac{1}{m_{\text{test}}}\sum_{i = 1}^{m_{\text{test}}} \frac{\|\bfV_{\nr}\bfZ(\mu_i^{\text{test}}) - \bfX(\mu_i^{\text{test}})\|_F}{\|\bfX(\mu_i^{\text{test}})\|_F}\,,
\label{eq:NumExp:RelAvgErrorTest}
\end{equation}
is plotted in Figure~\ref{fig:NumExp:BurgersError}c. The models learned from re-projected trajectories achieve similar errors as the intrusive reduced models, in contrast to models learned from trajectories without re-projection. Similar observations can be made for $\reprojnr = 15$ as shown in Figure~\ref{fig:NumExp:BurgersError}b for training parameters and training inputs and in Figure~\ref{fig:NumExp:BurgersError}d for test parameters and test inputs.

Now consider the difference 
\begin{equation}
\frac{1}{m_{\text{test}}}\sum_{i = 1}^{m_{\text{test}}}\frac{\|\bfZ(\mu_i^{\text{test}}) - \tbfX(\mu_i^{\text{test}})\|_F}{\|\tbfX(\mu_i^{\text{test}})\|_F}
\label{eq:NumExp:DiffError}
\end{equation}
between the trajectories of the intrusive reduced models and the trajectories computed with the learned models. Thus, $\bfZ(\mu_i^{\text{test}})$ in \eqref{eq:NumExp:DiffError} is either the trajectory obtain with $\hbff(\cdot, \cdot; \mu_i^{\text{test}})$ or with $\bbff(\cdot, \cdot; \mu_i^{\text{test}})$ for $i = 1, \dots, m_{\text{test}}$. The difference \eqref{eq:NumExp:DiffError} is plotted in Figure~\ref{fig:NumExp:BurgersDifference}. The models learned from re-projected trajectories achieve a difference to the intrusive reduced model of less than $10^{-10}$, whereas the models learned from trajectories without re-projection are up to 8 orders of magnitude worse in terms of difference \eqref{eq:NumExp:DiffError} and diverge in most cases (missing values in the plots). 

\begin{figure}
\centering
\begin{tabular}{cc}
{\resizebox{0.48\columnwidth}{!}{\LARGE\input{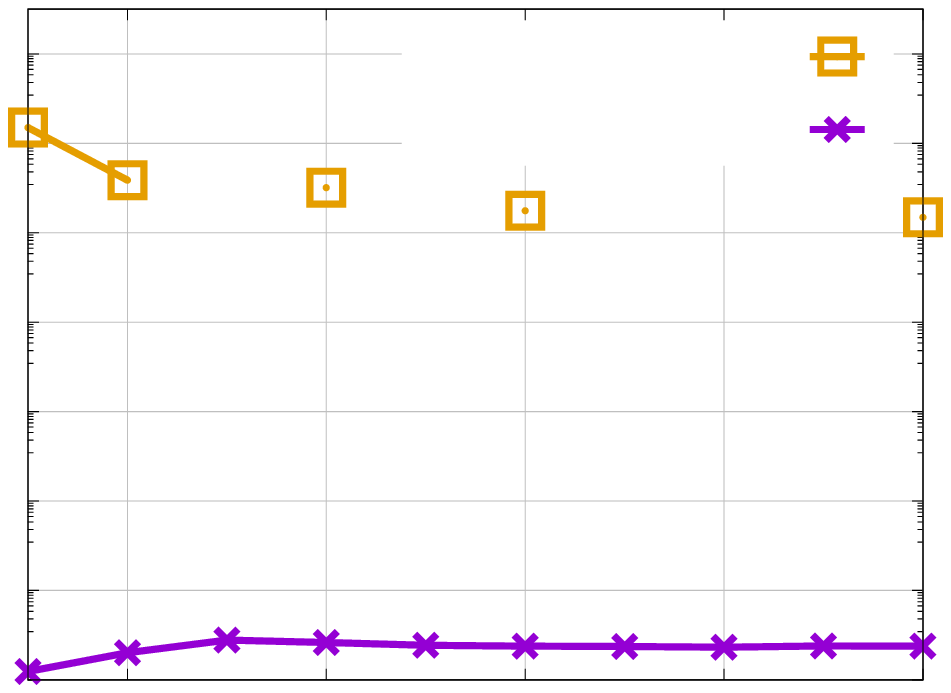}}} & 
{\resizebox{0.48\columnwidth}{!}{\LARGE\input{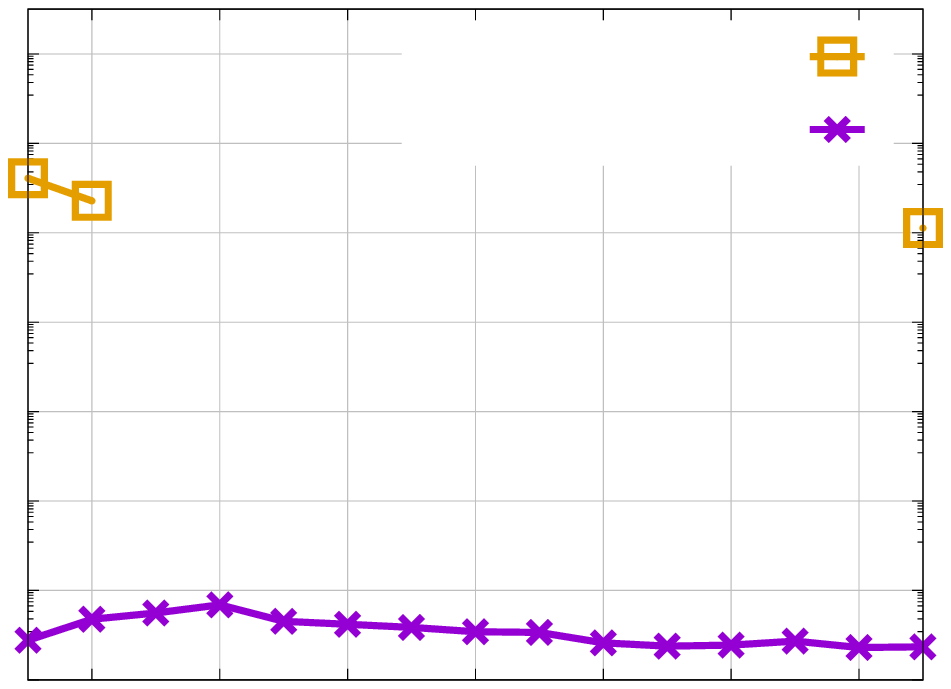}}}\\
\scriptsize (a) difference, re-projection dimension $\reprojnr = 10$ & \scriptsize (b) difference, re-projection dimension $\reprojnr = 15$\\
\end{tabular}
\caption{Burgers' equation: The plots show that time stepping models learned from re-projected trajectories give the same trajectories, up to numerical errors, as intrusive reduced models. In contrast, models learned from trajectories without re-projection lead to significantly different behavior than the corresponding intrusive reduced models. The results shown are for the test parameters $\mu^{\prime}_1, \dots, \mu^{\prime}_7$. Missing values mean that the states of the corresponding model diverged to NaNs during time stepping.}
\label{fig:NumExp:BurgersDifference}
\end{figure}

\subsection{Chafee-Infante equation}
\label{sec:NumExp:Chafee}
A similar setup as in \cite{doi:10.1137/16M1098280} is used in this section. 
\subsubsection{Setup}
Set the spatial domain to $\Omega = (0, 1) \subset \mathbb{R}$ and end time to $T = 4$. We consider the Chafee-Infante equation given by
\[
\frac{\partial}{\partial t} x(\xi, t) - \frac{\partial^2}{\partial \xi^2}x(\xi, t) + x^3(\xi, t) - x(\xi, t) = 0\,,\qquad \xi \in \Omega\,,
\]
with the spatial coordinate $\xi \in \Omega$ and time $t \in [0, T]$. Note that we consider a parameter-independent version of the Chafee-Infante equation. The boundary conditions are
\[
\frac{\partial}{\partial \xi} x(1, t) = 0\,,\qquad x(0, t) = u(t)\,,\qquad t \in [0, T]\,,
\]
with the input $u: [0, T] \to \mathbb{R}$. The initial condition is $x(\xi, t) = 0$ for $\xi \in \Omega \cup \{0, 1\}$. The spatial domain $\Omega$ is discretized on an equidistant grid with mesh width $2^{-7}$ and finite differences. Time is discretized with the forward Euler method and time-step size $\delta t = 10^{-5}$ to obtain the time-discrete dynamical system with polynomial nonlinear terms up to degree $\ell = 3$
\begin{equation}
\bfx_{k + 1} = \bfA_1\bfx_k + \bfA_2\bfx_k^2 + \bfA_3\bfx_k^3 + \bfB u_k\,,\qquad k = 0, \dots, K - 1
\label{eq:NumExp:Chafee:FOM}
\end{equation}
for $K = 4 \times 10^5$ and $\nh = 128$ and where the input matrix $\bfB$ corresponds to the discretization of the boundary conditions.

Consider the $m^{\prime} = 25$ input trajectories $\bfU_1, \dots, \bfU_{m^{\prime}}$ with components sampled from a uniform distribution in $[0, 10]$ and let $\bfX_1, \dots, \bfX_{m^{\prime}}$ be the corresponding trajectories of system \eqref{eq:NumExp:Chafee:FOM}. The same steps as in Section~\ref{sec:NumExp:Burgers:Setup} are performed to concatenate the trajectories $\bfX_1, \dots, \bfX_{m^{\prime}}$, to derive a POD space of dimension $\reprojnr \in \mathbb{N}$ and the corresponding re-projected trajectories $\rbfX_1, \dots, \rbfX_{m^{\prime}}$ and the concatenated re-projected trajectory $\rbfX$, and to learn the model $\hbff(\cdot, \cdot)$ from the re-projected trajectory $\rbfX$. Additionally, as described in Section~\ref{sec:NumExp:Burgers:Setup}, the intrusive reduced model $\tbff(\cdot, \cdot)$ and the model $\bbff(\cdot, \cdot)$ learned from the trajectories without re-projection are constructed. The test input is $u(t) = 25(\sin(\pi t) + 1)$, which is also used in \cite{doi:10.1137/16M1098280}. 

\begin{figure}
\centering
\begin{tabular}{cc}
{\resizebox{0.48\columnwidth}{!}{\LARGE\input{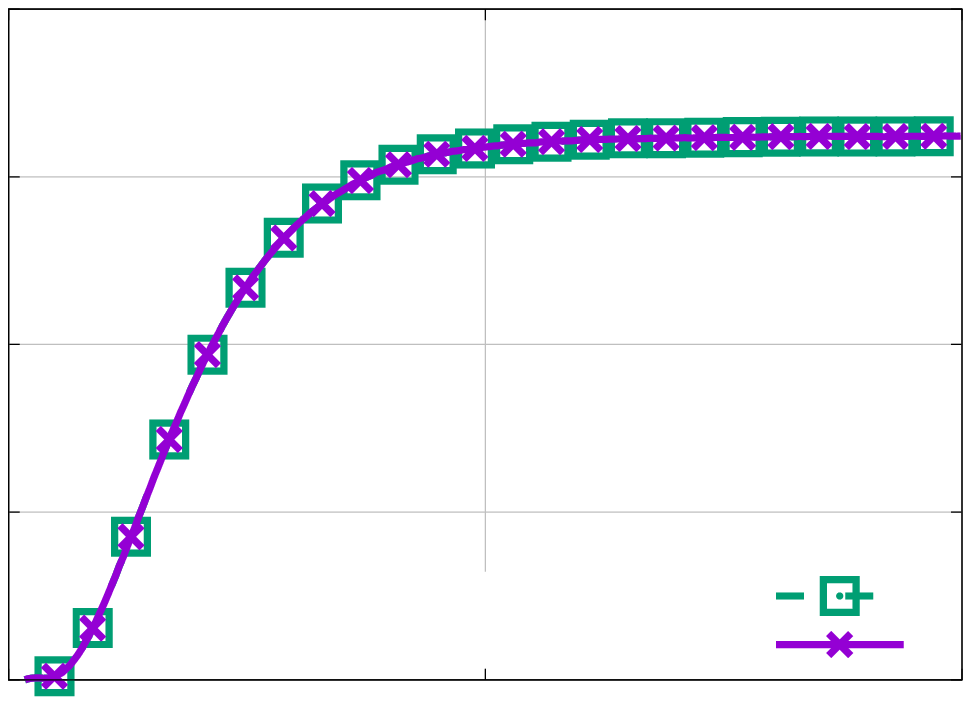}}} & 
{\resizebox{0.48\columnwidth}{!}{\LARGE\input{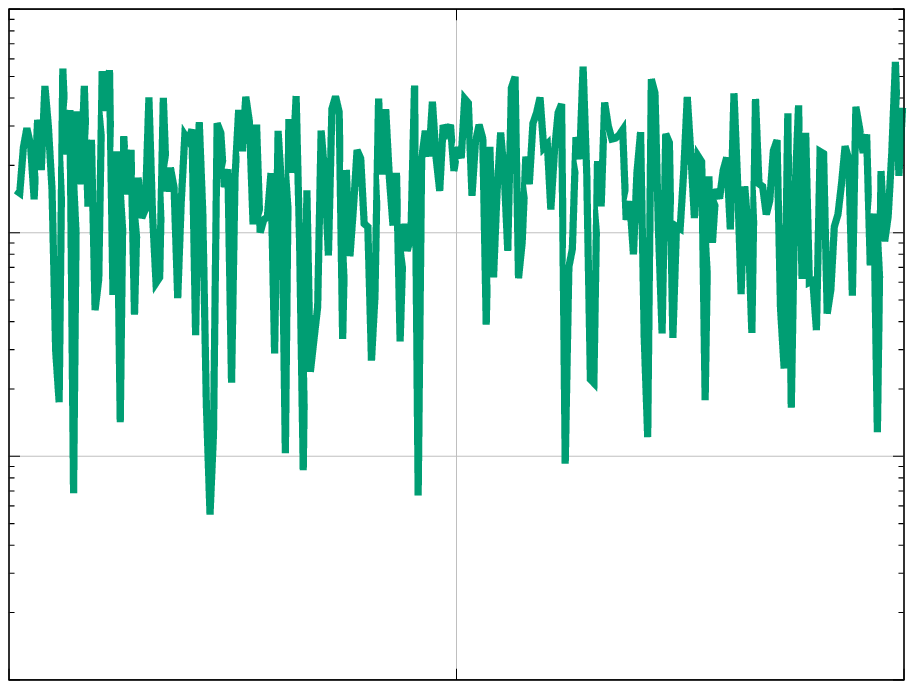}}}\\
\scriptsize (a) trajectories & \scriptsize (b) difference re-projected and projected trajectory
\end{tabular}
\caption{Chafee-Infante: Even though the projected trajectory (without re-projection) and the re-projected trajectory are similar in this example, as shown in plot (a) and (b), the corresponding closure error \eqref{eq:Prelim:ClosureError} has a significant polluting effect on operators learned from trajectories without re-projection as shown in~Figure~\ref{fig:NumExp:ChafeeInfante}.}
\label{fig:NumExp:ChafeeTraj}
\end{figure}

\subsubsection{Results}
Consider the re-projected trajectory $\rbfX_1$ and the projected trajectory $\bbfX_1 = \bfV_{\reprojnr}^T\bfX_1$ for $\reprojnr = 10$. Let $\bfv_{\cdot, \nh} \in \mathbb{R}^{1 \times \reprojnr}$ be the last row of $\bfV_{\reprojnr}$ so that $\bfv_{\cdot, \nh}\bbfX_1$ and $\bfv_{\cdot, \nh}\rbfX_1$ is the approximation of the state at spatial coordinate $\xi = 1$ given by the projected and the re-projected trajectory, respectively. Figure~\ref{fig:NumExp:ChafeeTraj}a plots $\bfv_{\cdot, \nh}\bbfX_1$ and $\bfv_{\cdot, \nh}\rbfX_1$ restricted to time $t \in [0, 1]$. Both trajectories overlap, which indicates that the projected and the re-projected trajectory are similar in this example. The absolute value of the difference $\bfv_{\cdot, \nh}\bbfX_1 - \bfv_{\cdot, \nh}\rbfX_1$ against the time step is shown in Figure~\ref{fig:NumExp:ChafeeTraj}b and indicates again that the projected and the re-projected trajectories are close relative to the absolute value of the trajectories in Figure~\ref{fig:NumExp:ChafeeTraj}a; however, even this small difference has a polluting effect on operator inference that can lead to poor models. Consider Figure~\ref{fig:NumExp:ChafeeInfante}, which shows the test error
\begin{equation}
\frac{\|\bfV_{\nr}\bfZ_{\text{test}} - \bfX_{\text{test}}\|_F}{\|\bfX_{\text{test}}\|_F}\,,
\label{eq:NumExp:Chafee:TrainTestError}
\end{equation}
for $\nr \leq \reprojnr$ and where $\bfZ_{\text{test}}$ is computed with the test input with either model $\bff(\cdot, \cdot), \hbff(\cdot, \cdot), \tbff(\cdot, \cdot)$, or $\bbff(\cdot, \cdot)$. Even though the difference between the projected and the re-projected trajectories is small in this example, the results in Figure~\ref{fig:NumExp:ChafeeInfante} demonstrate that fitting a model to trajectories without re-projection leads to poor approximations of the intrusive reduced models. %The same observations holds for dimension $\reprojnr = 12$.

\begin{figure}
\centering
\begin{tabular}{cc}
{\resizebox{0.48\columnwidth}{!}{\LARGE\input{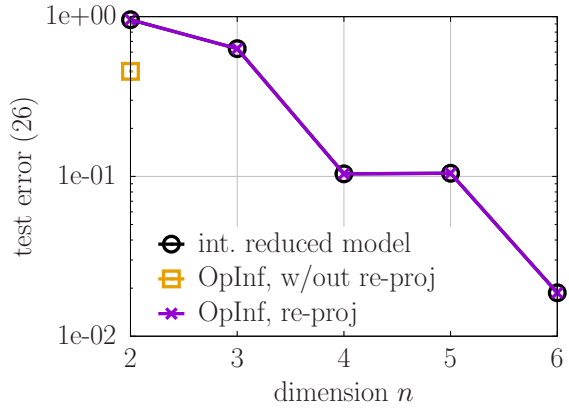}}} & 
{\resizebox{0.48\columnwidth}{!}{\LARGE\input{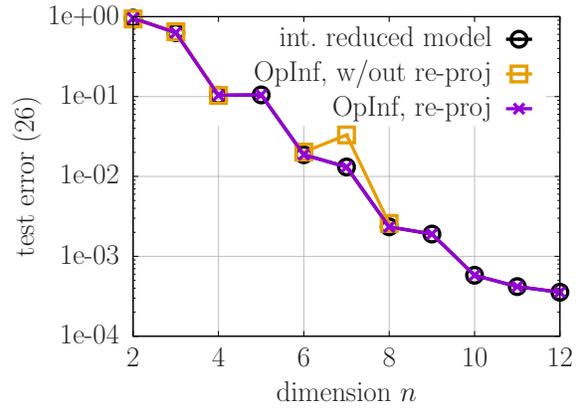}}}\\
\scriptsize (a) test, re-projection dimension $\reprojnr = 6$ & \scriptsize (b) test, re-projection dimension $\reprojnr = 12$\\
{\resizebox{0.48\columnwidth}{!}{\LARGE\input{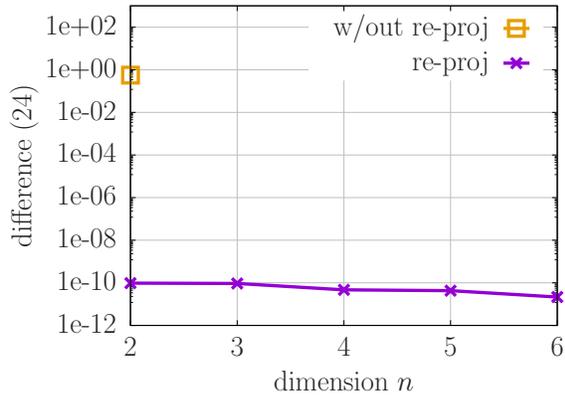}}} & 
{\resizebox{0.48\columnwidth}{!}{\LARGE\input{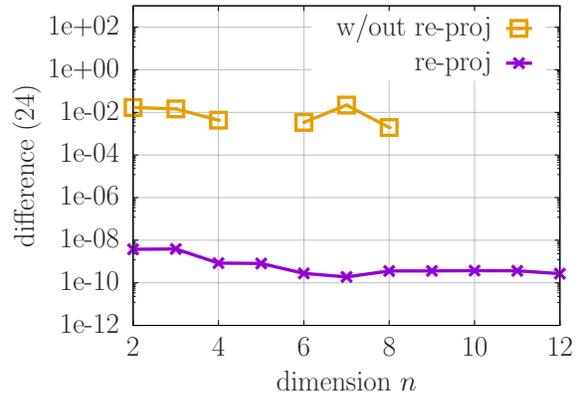}}}\\
\scriptsize (c) test, re-projection dimension $\reprojnr = 6$ & \scriptsize (d) test, re-projection dimension $\reprojnr = 12$
\end{tabular}
\caption{Chafee-Infante equation: Models learned from re-projected trajectories achieve similar performance in terms of error \eqref{eq:NumExp:Chafee:TrainTestError} as intrusive reduced models in this example. Even though the difference between re-projected trajectories and projected trajectories is small in this example (cf.~Figure~\ref{fig:NumExp:ChafeeTraj}), models learned from trajectories without re-projection perform significantly worse than models learned from re-projected trajectories. Missing values correspond to models that numerically diverged during time stepping.}
\label{fig:NumExp:ChafeeInfante}
\end{figure}

\subsection{Diffusion-reaction equation}
\label{sec:NumExp:DiffReact}
The setup of the diffusion-reaction equation in this section follows the example in \cite{PM18MultiTM}. 

\subsubsection{Setup}
Let $\Omega = (0, 1)^2 \subset \mathbb{R}^2$ be the spatial domain with boundary $\partial\Omega$ and closed set $\bar{\Omega} = \Omega \cup \partial \Omega$. Let further $\mu \in \Dcal = [1, 1.5]$ be the parameter domain. Consider the PDE
\begin{equation}
\frac{\partial}{\partial t} x(\bfxi, t; \mu) = -\Delta x(\bfxi, t; \mu) + s(\bfxi)u(t) + g(x(\bfxi, t; \mu))\,,\qquad \bfxi \in \Omega\,,
\label{eq:NumExp:DiffReact:Eq}
\end{equation}
where the spatial coordinate is $\bfxi = [\xi_1, \xi_2]^T$, the source term $s: \bar{\Omega} \to \mathbb{R}$ is $s(\bfxi) = 10^{-1}\sin(2\pi\xi_1)\sin(2\pi\xi_2)$, and the nonlinear term $g: \bar{\Omega} \to \mathbb{R}$ is the second-order Taylor approximation of $x \mapsto -(a\sin(\mu)+2)\exp(-(\mu^2)b)\exp(\mu xc)$ about 0 and with $a = 0.1, b = 2.7$ and $c = 1.8$, which is the same nonlinear term as used in \cite{PM18MultiTM}. The initial condition is 0. We impose homogeneous Neumann boundary conditions and discretize with finite difference on a grid with 64 equidistant grid points in each dimension. Time is discretized with the forward Euler method and time step size $\delta t = 10^{-2}$ and end time $T = 100$. The corresponding time-discrete dynamical system is
\[
\bfx_{k + 1}(\mu) = \bfA_1(\mu)\bfx_k(\mu) + \bfA_2(\mu)\bfx_k^2(\mu) + \bfA_3(\mu)\bfx_k^3(\mu) + \bfB u_k\,,\qquad k = 0, \dots, K - 1\,,
\]
for $K = 10^4$. The dimension $\nh$ of the state $\bfx_k$ at time step $k$ is $\nh = 64^2 = 4096$. Plots of $\bfx_K(\mu)$ for $\mu = 1.0625$ and $\mu = 1.4375$ are given in Figure~\ref{fig:NumExp:DiffReactSurface}.

To construct a reduced space, we select $m = 10$ equidistant parameters $\mu_1, \dots, \mu_m \in \Dcal$ and set the inputs to be realizations of the random variables uniformly distributed in $[1, 1000]$. From these trajectories, the basis matrix $\bfV_{\reprojnr}$ with $\reprojnr = 10$ columns is computed with POD. Then, re-projected trajectories are sampled up to time $t = 5$ (instead of end time $T = 100$). For each $\mu_i$, 10 re-projected trajectories with different random inputs are derived, and concatenated together as described in Section~\ref{sec:ReProj:Practical}. The concatenation of trajectories ensures that the data matrix $\rbfD$ has full rank in this example. Models are learned with operator inference from the re-projected trajectories to obtain $\hbff(\cdot, \cdot; \mu_1), \dots, \hbff(\cdot, \cdot; \mu_m)$. The same process is repeated for the trajectories without re-projection to obtain the models $\bbff(\cdot, \cdot; \mu_1), \dots, \bbff(\cdot, \cdot; \mu_m)$. The rest of the setup is the same as in Section~\ref{sec:NumExp:Burgers}. Test parameters are 7 equidistantly chosen parameters in $\Dcal$. Test inputs are realizations of random variables with uniform distribution in $[1, 1000]$.

\begin{figure}
\centering
\begin{tabular}{cc}
{\resizebox{0.48\columnwidth}{!}{\LARGE\input{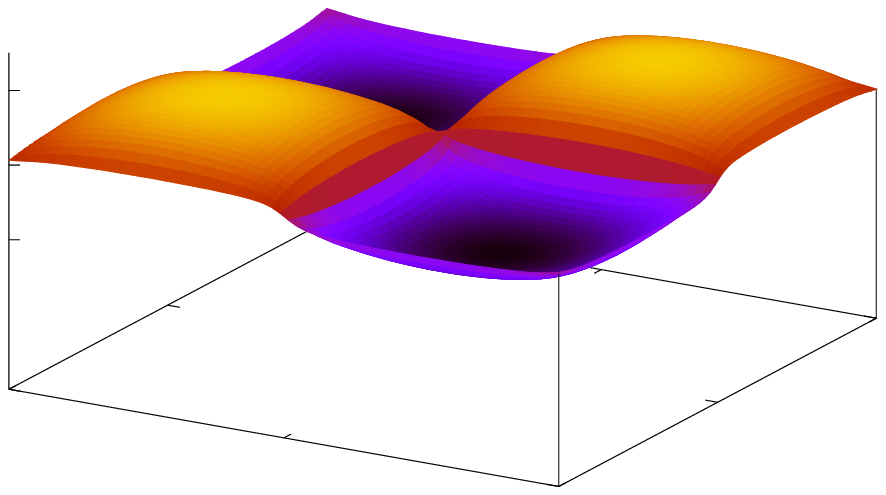}}} & {\resizebox{0.48\columnwidth}{!}{\LARGE\input{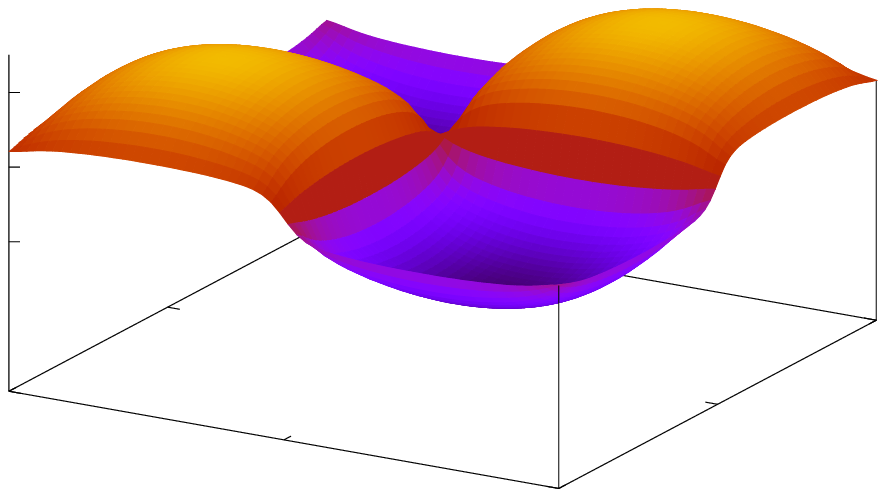}}}\\
\scriptsize (a) high-dimensional system, $\mu = 1.0625$ & \scriptsize (b) high-dimensional system, $\mu = 1.4375$
\end{tabular}
\caption{Diffusion-reaction: Plots show the numerical approximation of the solution of equation \eqref{eq:NumExp:DiffReact:Eq} for parameters $\mu = 1.0625$ and $\mu = 1.4375$, respectively.}
\label{fig:NumExp:DiffReactSurface}
\end{figure}

\subsubsection{Results}
Figure~\ref{fig:NumExp:DiffReactError}a shows the error \eqref{eq:NumExp:RelAvgError} for the training parameters and training inputs. The model fitted to trajectories without re-projection numerically diverged to NaNs during time stepping for all dimensions $\nr > 2$. The model fitted to re-projected trajectories closely matches the behavior of the intrusive reduced model as expected from the analysis presented in Corollary~\ref{cor:Exact}. The same observations can be made for the error \eqref{eq:NumExp:RelAvgErrorTest} with the test parameters and test inputs.

\begin{figure}
\centering
\begin{tabular}{cc}
{\resizebox{0.48\columnwidth}{!}{\LARGE\input{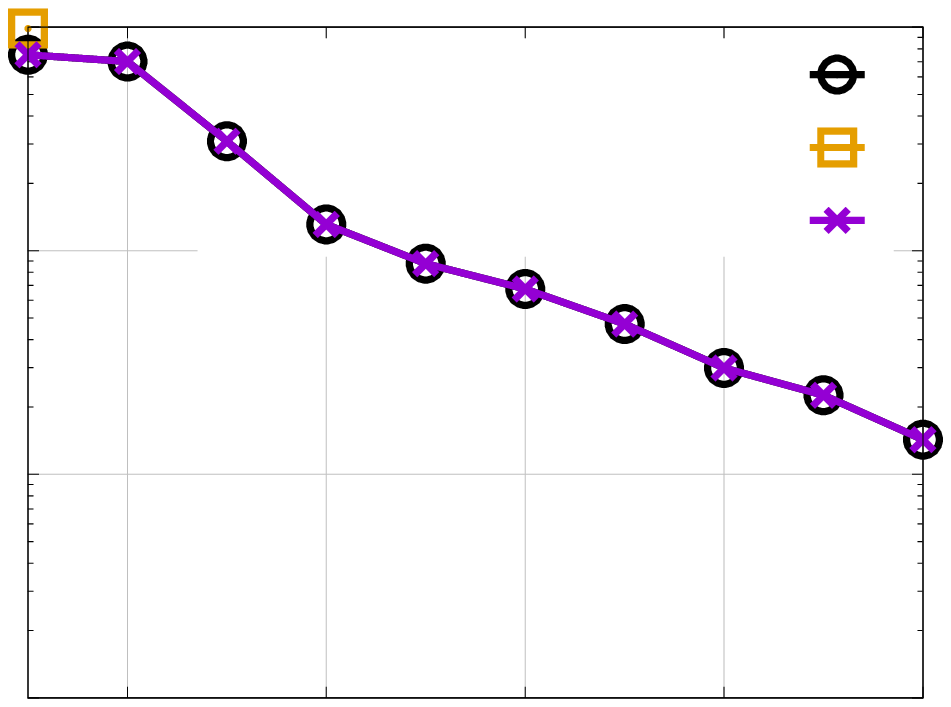}}} & {\resizebox{0.48\columnwidth}{!}{\LARGE\input{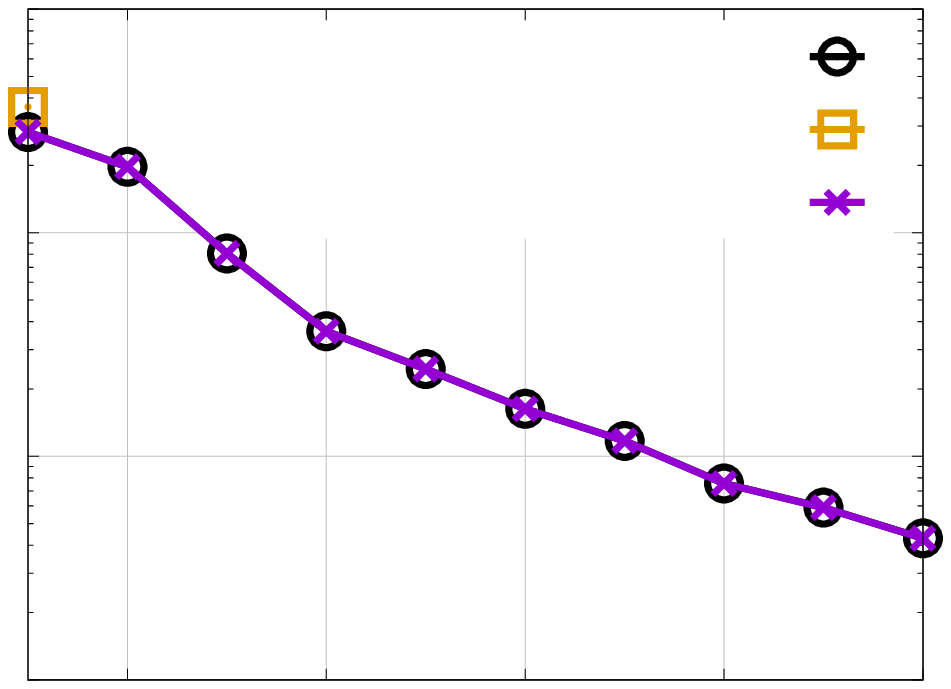}}}\\
\scriptsize (a) training & \scriptsize (b) test
\end{tabular}
\caption{Diffusion-reaction: Models learned from trajectories without re-projection show unstable behavior (missing values) for all dimensions $\nr > 2$. In contrast, models learned with operator inference from re-projected trajectories achieve the same errors as the intrusive reduced models, which is guaranteed by Corollary~\ref{cor:Exact} in this example.}
\label{fig:NumExp:DiffReactError}
\end{figure}

\section{Conclusions}
\label{sec:Conc}
The presented approach exactly recovers reduced models from data under certain conditions. This result holds pre-asymptotically in the number of data points and the dimension of the reduced space as long as the corresponding data matrix is full rank. The optimization problem underlying operator inference with re-projected trajectories is convex and can be solved with standard numerical linear algebra packages. Numerical experiments demonstrate that reduced models are learned up to numerical errors in practice for a wide class of systems with polynomial nonlinear terms. 

\section*{Acknowledgments}
The author would like to thank Elizabeth Qian, Nihar Sawant, and Karen Willcox for many helpful discussions. This work was partially supported by US Department of Energy, Office of Advanced Scientific Computing Research, Applied Mathematics Program (Program Manager Dr. Steven Lee), DOE Award DESC0019334. The numerical experiments were computed with support through the NYU IT High Performance Computing resources, services, and staff expertise. 

\bibliography{reprojopinf}
\bibliographystyle{abbrv}

\end{document}